\DeclareSymbolFont{bbold}{U}{bbold}{m}{n}
\DeclareSymbolFontAlphabet{\mathbbm}{bbold}
\newcommand{\rest}{\mathbin{\upharpoonright}}
\newcommand{\B}{\mathbb{B}}
\newcommand{\C}{\mathbb{C}}
\newcommand{\bbot}{\mathbbm{0}}
\newcommand{\btop}{\mathbbm{1}}
\newcommand{\modulo}[1]{({\mathgroup\symoperators mod}\mkern6mu#1)}
\DeclareMathOperator{\dom}{dom}
\DeclareMathOperator{\Part}{Part}
\DeclareMathOperator{\End}{End}
\DeclareMathOperator{\cof}{cof}
\DeclarePairedDelimiter{\abs}{\lvert}{\rvert}
\DeclarePairedDelimiterX{\Set}[2]{\{}{\}}{ #1 \nonscript\:\delimsize\vert\allowbreak\nonscript\:\mathopen{} #2 }
\DeclarePairedDelimiterX{\Seq}[2]{\langle}{\rangle}{ #1 \nonscript\:\delimsize\vert\allowbreak\nonscript\:\mathopen{} #2 }
\theoremstyle{plain}
\newtheorem{theorem}{Theorem}[section]
\newtheorem{proposition}[theorem]{Proposition}
\newtheorem{lemma}[theorem]{Lemma}
\theoremstyle{definition}
\newtheorem{definition}[theorem]{Definition}
\newtheorem{question}[theorem]{Question}
\theoremstyle{remark}
\newtheorem{remark}[theorem]{Remark}
\begin{document}
\title{Orderings of ultrafilters on Boolean algebras}
\author{J\"org Brendle}
\address{Graduate School of System Informatics\\Kobe University\\1-1 Rokkodai-cho\\Nada-ku\\Kobe 657-8501\\Japan}
\author{Francesco Parente}
\address{Institut f\"ur Diskrete Mathematik und Geometrie\\Technische Universit\"at Wien\\Wiedner Hauptstra{\ss}e 8-10/104\\1040 Vienna\\Austria}
\dedicatory{Dedicated to the memory of Kenneth Kunen (1943--2020)}
\thanks{The first author is partially supported by Grant-in-Aid for Scientific Research (C) 18K03398, Japan Society for the Promotion of Science. The second author was funded by an International Research Fellowship of the Japan Society for the Promotion of Science, as well as Austrian Science Fund (FWF) project P33420.}
\begin{abstract}
We study two generalizations of the Rudin-Keisler ordering to ultrafilters on complete Boolean algebras. To highlight the difference between them, we develop new techniques to construct incomparable ultrafilters in this setting. Furthermore, we discuss the relation with Tukey reducibility and prove that, assuming the Continuum Hypothesis, there exist ultrafilters on the Cohen algebra which are RK-equivalent in the generalized sense but Tukey-incomparable, in stark contrast with the classical setting.
\end{abstract}
\maketitle

\section{Introduction}
The Rudin-Keisler ordering is a basic tool in the study of ultrafilters over $\omega$. In 1972, Kunen \cite{kunen:ultrafilters} proved that the ordering is not linear, that is, there exist ultrafilters which are RK-incomparable. The method of independent families, used by Kunen in the proof, has since then become widespread in set theory and general topology. Recently, the interplay between the Rudin-Keisler ordering, Tukey reducibility, and combinatorial properties of ultrafilters has re-emerged as a fertile topic of research, surveyed in \cite{dobrinen:tukey}.

In 1999, Murakami \cite{murakami:rk} extended the definition of the Rudin-Keisler ordering to ultrafilters on complete Boolean algebras: his notion, which we call the M-ordering, is formulated in terms of complete homomorphisms. Two years later Jipsen, Pinus, and Rose \cite{jpr:rk} proposed independently a second generalization, motivated by model-theoretic considerations, which we call the JPR-ordering. Both extensions coincide with the classical ordering if the algebra is atomic but, as far as we know, no work so far has examined their relation in the general setting. In Section \ref{section:due}, to which we refer the reader for precise definitions, we introduce the aforementioned concepts and observe that the M-ordering is stronger than both the JPR-ordering and Tukey reducibility.

In Section \ref{section:tre}, we extend Kunen's method of independent families to a wide class of complete Boolean algebras: as a result, we establish in ZFC the existence of ultrafilters which are Tukey-equivalent but JPR-incomparable. Section \ref{section:quattro}, on the other hand, is dedicated to showing that the two extensions of the RK-ordering need not coincide: in fact, we build two ultrafilters which are JPR-equivalent but M-incomparable. Our argument relies on the Continuum Hypothesis and is applicable to several Boolean algebras, such as the Cohen and random forcing.

In the final section, we restrict our attention to the Cohen algebra, on which we construct ultrafilters which are JPR-equivalent but Tukey-incomparable. This configuration is strikingly different from atomic algebras, where the RK-ordering is known to be stronger than Tukey reducibility. The main ingredient of our construction is the analysis of Tukey reductions on coherent $P$-ultrafilters, for which we establish a canonical representation theorem parallel to Dobrinen and Todor\v{c}evi\'c \cite{dt:tukey}.

\section{The orderings}\label{section:due}

In this section, we introduce the orderings under investigation. To do so, we assume some familiarity with the language of Boolean algebras. Suppose $\B$ is a Boolean algebra; given $B\subseteq\B$ and $x\in B$, we define
\[
B\rest x=\Set{b\in B}{b\le x}.
\]
If $A$ and $B$ are two antichains in $\B$, we say that $B$ is a \emph{refinement} of $A$ if for every $b\in B$ there exists $a\in A$ such that $b\le a$. Finally, let $\Part(\B)$ be the partially ordered set of all maximal antichains in $\B$ equipped with the refinement relation.

The starting point is the ordering introduced in the sixties by Rudin \cite{rudin:ultrafilters} and Keisler \cite{keisler:notes} independently: given two ultrafilters $U$ and $V$ over a set $A$, we say that \emph{$U\le_\mathrm{RK} V$} if there exists a function $f\colon A\to A$ such that for all $X\subseteq A$
\[
X\in U\iff f^{-1}[X]\in V.
\]
All the orderings we consider are in fact reflexive transitive relations. Accordingly, if both $U\le V$ and $V\le U$ then we shall say that $U$ and $V$ are \emph{equivalent}, in symbols $U\equiv V$.

The RK-ordering has a model-theoretic characterization, due to Blass \cite[Proposition 11.7]{blass:thesis}, and a Boolean-algebraic reformulation, both of which are summarized in the next proposition.

\begin{proposition}\label{proposition:blass} Let $U$ and $V$ be ultrafilters over a set $A$; then the following conditions are equivalent:
\begin{enumerate}
\item $U\le_\mathrm{RK} V$;
\item\label{blassdue} there exist $Y\in V$ and a complete homomorphism $h\colon\mathcal{P}(A)\to\mathcal{P}(Y)$ such that $U=h^{-1}[V]$;
\item\label{blasstre} for every structure $\mathfrak{M}$, there exists an elementary embedding $\mathfrak{M}^A\!/U\to\mathfrak{M}^A\!/V$.
\end{enumerate}
\end{proposition}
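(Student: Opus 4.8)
The plan is to split the cycle of equivalences into its Boolean-algebraic half, $(1)\Leftrightarrow(2)$, and its model-theoretic half, $(1)\Leftrightarrow(3)$, proving the four implications $(1)\Rightarrow(2)$, $(2)\Rightarrow(1)$, $(1)\Rightarrow(3)$, and $(3)\Rightarrow(1)$. Throughout I would exploit that $\mathcal{P}(A)$ is a complete \emph{atomic} Boolean algebra whose atoms are the singletons, together with \L o\'s's theorem for ultrapowers.

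For $(1)\Rightarrow(2)$, given a witnessing $f\colon A\to A$, I would take $Y=A\in V$ and define $h\colon\mathcal{P}(A)\to\mathcal{P}(A)$ by $h(X)=f^{-1}[X]$. Since preimages commute with arbitrary unions, intersections, and complements, $h$ is a complete homomorphism, and the defining biconditional of $\le_\mathrm{RK}$ states precisely that $X\in U$ iff $h(X)\in V$, that is, $U=h^{-1}[V]$. For the converse $(2)\Rightarrow(1)$, the key point is that a complete homomorphism is determined by its action on atoms, so $h(X)=\bigcup_{a\in X}h(\{a\})$; as $h$ preserves disjointness and the top element, the nonempty sets among the $h(\{a\})$ form a partition of $Y=h(A)$. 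This partition induces a function $g\colon Y\to A$ with $g^{-1}[X]=h(X)$ for all $X$. Extending $g$ to some $f\colon A\to A$ and using that $Y\in V$---so that membership in $V$ ignores what happens off $Y$---I would conclude $X\in U\iff f^{-1}[X]\in V$, as desired.

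For $(1)\Rightarrow(3)$, given $f$ I would define $\mathfrak{M}^A\!/U\to\mathfrak{M}^A\!/V$ by $[s]_U\mapsto[s\circ f]_V$. Applying \L o\'s's theorem to the set $X=\Set{a\in A}{\mathfrak{M}\models\varphi(s_1(a),\dots,s_n(a))}$ and invoking $X\in U\iff f^{-1}[X]\in V$, I would check that the truth value of every formula $\varphi$ at a tuple is preserved; the instance where $\varphi$ is equality gives well-definedness and injectivity, and the general case gives elementarity.

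The main obstacle is $(3)\Rightarrow(1)$, where a genuine function must be extracted from the mere existence of elementary embeddings. Here I would apply the hypothesis to a deliberately rich structure: let $\mathfrak{M}$ have universe $A$ together with a unary predicate $P_X$ interpreted as $X$, one for every $X\subseteq A$. Fixing an elementary embedding $j$ and writing $j\bigl([\mathrm{id}_A]_U\bigr)=[g]_V$, I would evaluate $\mathfrak{M}^A\!/U\models P_X([\mathrm{id}_A]_U)$ and $\mathfrak{M}^A\!/V\models P_X([g]_V)$ by \L o\'s's theorem, obtaining $X\in U$ on the left and $g^{-1}[X]\in V$ on the right; elementarity of $j$ forces these to agree, so $g$ witnesses $U\le_\mathrm{RK} V$. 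The delicate insight is that one must encode \emph{all} subsets as predicates, so that the single diagonal element $[\mathrm{id}_A]_U$ carries the entire reduction---a device that depends heavily on the atomicity of $\mathcal{P}(A)$ and signals exactly where the passage to general Boolean algebras in the rest of the paper will require new ideas.
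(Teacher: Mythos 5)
Your proof is correct and is the standard argument: the paper itself gives no proof of this proposition, citing Blass's thesis for the model-theoretic equivalence, and your four implications (preimage homomorphism for $(1)\Rightarrow(2)$, recovering a function from the atoms $h(\{a\})$ for $(2)\Rightarrow(1)$, the induced map $[s]_U\mapsto[s\circ f]_V$ for $(1)\Rightarrow(3)$, and the expansion by predicates $P_X$ applied to $[\mathrm{id}_A]_U$ for $(3)\Rightarrow(1)$) are exactly the classical ones. Your closing remark is also apt: both $(2)\Rightarrow(1)$ and $(3)\Rightarrow(1)$ lean on atomicity of $\mathcal{P}(A)$, which is precisely why the two generalizations studied in the paper can diverge on atomless algebras.
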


In this paper, we shall focus on two generalizations to complete Boolean algebras of \eqref{blassdue} and \eqref{blasstre} respectively. As both have appeared in the literature under the name ``Rudin-Keisler ordering'', to avoid confusion we have no choice but to temporarily rename them. Let us move on to present the first Boolean-algebraic formulation.

\begin{definition}[Murakami \cite{murakami:rk}]\label{definition:m} Let $U$ and $V$ be ultrafilters on complete Boolean algebras $\B$ and $\C$, respectively. We say that \emph{$U\le_\mathrm{M} V$} if there exist $v\in V$ and a complete homomorphism $h\colon\B\to\C\rest v$ such that $U=h^{-1}[V]$.
\end{definition}

Clearly reminiscent of condition \eqref{blassdue} of Proposition \ref{proposition:blass}, the M-ordering was originally introduced in the framework of nonstandard universes. Two years later, however, a second generalization was proposed independently.

\begin{definition}[{Jipsen, Pinus, and Rose \cite[Definition 1.1]{jpr:rk}}]\label{definition:jpr} Let $U$ and $V$ be ultrafilters on complete Boolean algebras $\B$ and $\C$, respectively. We say that \emph{$U\le_\mathrm{JPR} V$} if there exist a function $g\colon\Part(\B)\to\Part(\C)$ and, for each $A\in\Part(\B)$, a function $f_A\colon g(A)\to A$ such that:
\begin{enumerate}
\item\label{eq:jpruno} for every $A\in\Part(\B)$ and every $X\subseteq A$,
\[
\bigvee X\in U\iff\bigvee f_A^{-1}[X]\in V;
\]
\item\label{eq:jprdue} if $A,B\in\Part(\B)$ and $B$ is a refinement of $A$, then
\[
\bigvee\Set{a\wedge b}{a\in g(A)\text{, }b\in g(B)\text{, and }f_{B}(b)\le f_A(a)}\in V.
\]
\end{enumerate}
\end{definition}

Although it may not be evident at first glance, the characterization in \cite[Theorem 2.4]{jpr:rk} shows that the JPR-ordering is indeed a generalization of point \eqref{blasstre} of Proposition \ref{proposition:blass}, provided that ultrapowers are replaced with Boolean ultrapowers.

\begin{remark} From now on, we shall only be interested in comparing ultrafilters $U$ and $V$ over the same set or, more generally, on the same complete Boolean algebra.
\end{remark}

Two sufficient conditions for the JPR-ordering are sometimes easier to check: we collect them in the following lemma.

\begin{lemma}[Jipsen, Pinus, and Rose \cite{jpr:rk}]\label{lemma:dense} Let $U$ and $V$ be ultrafilters on a complete Boolean algebra $\B$.
\begin{enumerate}
\item\label{lemma:jpruno} If the conditions of Definition \ref{definition:jpr} are satisfied for all $A$ in a dense subset of $\Part(\B)$, then $U\le_\mathrm{JPR} V$.\item\label{lemma:jprdue} If there exist $u\in U$ and $v\in V$ such that $U\rest u\le_\mathrm{JPR} V\rest v$, then $U\le_\mathrm{JPR} V$.
\end{enumerate}
\end{lemma}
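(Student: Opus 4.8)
The plan is to derive, in each of the two cases, the global data $g\colon\Part(\B)\to\Part(\C)$ together with the maps $f_A$ demanded by Definition \ref{definition:jpr}, and then to verify conditions \eqref{eq:jpruno} and \eqref{eq:jprdue} for every $A$ and every refining pair. Throughout I use that $\Part(\B)$ is downward directed: any two maximal antichains $A$ and $B$ have the common refinement $\Set{a\wedge b}{a\in A\text{, }b\in B\text{, and }a\wedge b\ne\bbot}$.

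For part \eqref{lemma:jpruno}, let $D$ be the dense set carrying the given data. By density and directedness I would fix, for each $A\in\Part(\B)$, some $A^{*}\in D$ refining $A$, taking $A^{*}=A$ whenever $A\in D$; since $A^{*}$ refines $A$, every $a^{*}\in A^{*}$ lies below a unique element $\pi_A(a^{*})\in A$. I then set $g(A)=g(A^{*})$ and $f_A=\pi_A\circ f_{A^{*}}$. Condition \eqref{eq:jpruno} is the easy half: for $X\subseteq A$ one has $f_A^{-1}[X]=f_{A^{*}}^{-1}\bigl[\pi_A^{-1}[X]\bigr]$, and because the elements of $A^{*}$ below a given $x\in X$ join to $x$, we get $\bigvee\pi_A^{-1}[X]=\bigvee X$; the instance of \eqref{eq:jpruno} already available at $A^{*}$ then transports the equivalence down to $A$.

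The main obstacle is condition \eqref{eq:jprdue} for an arbitrary pair with $B$ refining $A$. Here I would choose $A^{*},B^{*}\in D$ as above and, invoking directedness and density once more, a single $C\in D$ refining both $A^{*}$ and $B^{*}$. Applying \eqref{eq:jprdue} to the pairs $(A^{*},C)$ and $(B^{*},C)$ produces two elements of $V$, whose meet is
\[
\bigvee\Set{\alpha\wedge\beta\wedge\gamma}{\alpha\in g(A^{*})\text{, }\beta\in g(B^{*})\text{, }\gamma\in g(C)\text{, }f_C(\gamma)\le f_{A^{*}}(\alpha)\text{, and }f_C(\gamma)\le f_{B^{*}}(\beta)},
\]
the cross terms with distinct $\gamma$ vanishing because $g(C)$ is an antichain. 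The crux is the order-theoretic claim that every such triple satisfies $f_B(\beta)\le f_A(\alpha)$: indeed $f_C(\gamma)\le f_{A^{*}}(\alpha)\le f_A(\alpha)$ and likewise $f_C(\gamma)\le f_B(\beta)$, so the unique element of $A$ lying above $f_B(\beta)$, which exists since $B$ refines $A$, is also the one lying above $f_C(\gamma)$, namely $f_A(\alpha)$, whence $f_B(\beta)\le f_A(\alpha)$. Consequently the displayed element lies below $\bigvee\Set{a\wedge b}{a\in g(A)\text{, }b\in g(B)\text{, and }f_B(b)\le f_A(a)}$, which therefore belongs to $V$, as required.

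For part \eqref{lemma:jprdue}, I would relativize. Let $g_0$ and the maps $f^0_A$ witness $U\rest u\le_\mathrm{JPR} V\rest v$. For $A\in\Part(\B)$ the trace $A\rest u=\Set{a\wedge u}{a\in A\text{ and }a\wedge u\ne\bbot}$ is a maximal antichain of $\B\rest u$, and I set $g(A)=g_0(A\rest u)\cup\bigl(\{\neg v\}\setminus\{\bbot\}\bigr)$, a maximal antichain of $\C$ since $\bigvee g_0(A\rest u)=v$. I define $f_A$ on $g_0(A\rest u)$ by composing $f^0_{A\rest u}$ with the projection $a\wedge u\mapsto a$, and send $\neg v$ to an arbitrary element of $A$. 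Because $v\in V$, adjoining or discarding $\neg v\notin V$ never affects membership in $V$, and because $u\in U$, meeting with $u$ never affects membership in $U$; hence condition \eqref{eq:jpruno} at $A$ reduces to its relativized instance at $A\rest u$. For \eqref{eq:jprdue}, if $B$ refines $A$ then $B\rest u$ refines $A\rest u$, and the relativized instance yields an element of $V\rest v\subseteq V$ lying below $\bigvee\Set{a\wedge b}{a\in g(A)\text{, }b\in g(B)\text{, and }f_B(b)\le f_A(a)}$, by the same projection bookkeeping as in the first part.
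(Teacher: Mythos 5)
The paper states this lemma without proof, citing Jipsen--Pinus--Rose, so there is no in-text argument to compare against; judged on its own, your proof is correct. The two places where something could go wrong are both handled properly: in part \eqref{lemma:jpruno}, the passage from the triple condition $f_C(\gamma)\le f_{A^*}(\alpha)$, $f_C(\gamma)\le f_{B^*}(\beta)$ to $f_B(\beta)\le f_A(\alpha)$ really does need the observation that the nonzero element $f_C(\gamma)$ lies below at most one member of the antichain $A$, and in part \eqref{lemma:jprdue} the implication from $f^0_{B\rest u}(\beta)\le f^0_{A\rest u}(\alpha)$ (an inequality between traces) to $f_B(\beta)\le f_A(\alpha)$ needs the same uniqueness argument via the fact that $B$ refines $A$; you invoke exactly these points. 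The remaining ingredients --- $\bigvee\pi_A^{-1}[X]=\bigvee X$ by maximality of the refining antichain, the computation of the meet of two joins via the infinite distributive law with the cross terms killed because $g(C)$ is an antichain, and the harmlessness of $\neg v\notin V$ and of meeting with $u\in U$ --- are all standard and correctly deployed. One small presentational remark: it would be worth saying explicitly that the dense-set hypothesis is being read as ``$g$ and the $f_A$ are given on $D$ and conditions \eqref{eq:jpruno} and \eqref{eq:jprdue} hold whenever all antichains involved lie in $D$'', since that is the reading your choice of $C\in D$ refining both $A^*$ and $B^*$ relies on.
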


To sum up, by Proposition \ref{proposition:blass} the M-ordering and the JPR-ordering coincide with the classical RK-ordering on atomic algebras, but to what extend can they differ in general? From what is stated so far, it already follows that one ordering is stronger than the other.

\begin{proposition} Let $U$ and $V$ be ultrafilters on a complete Boolean algebra $\B$. If $U\le_\mathrm{M} V$ then $U\le_\mathrm{JPR} V$.
\end{proposition}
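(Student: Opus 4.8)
The plan is to convert the Murakami witness directly into JPR data. Using $U\le_\mathrm{M} V$, fix an element $v\in V$ together with a complete homomorphism $h\colon\B\to\B\rest v$ satisfying $U=h^{-1}[V]$. The guiding observation is that $h$ sends a maximal antichain $A$ of $\B$ to the family $\Set{h(a)}{a\in A}$, whose nonzero members are pairwise disjoint (because $h$ preserves meets) and join to $h\bigl(\bigvee A\bigr)=h(\btop)=v$ (because $h$ preserves arbitrary joins). This is an antichain reaching only up to $v$, so to obtain a maximal antichain of $\B$ I need only adjoin the complementary block $\neg v$.

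Accordingly, for each $A\in\Part(\B)$ I would let $g(A)$ consist of all nonzero elements $h(a)$ with $a\in A$, together with the single block $\neg v$ (omitted in the degenerate case $v=\btop$); the computation above shows $g(A)\in\Part(\B)$. I then define $f_A\colon g(A)\to A$ by sending each nonzero $h(a)$ to $a$---this is unambiguous, since distinct nonzero $h(a)$ are disjoint and hence the images of distinct blocks cannot coincide---and by sending $\neg v$ to some fixed element of $A$, the particular choice being irrelevant.

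For condition \eqref{eq:jpruno} I would fix $X\subseteq A$ and use that $h$ preserves joins to get $\bigvee\Set{h(a)}{a\in X}=h\bigl(\bigvee X\bigr)$, so that $\bigvee f_A^{-1}[X]$ equals $h(\bigvee X)$, possibly joined with $\neg v$. As $v\in V$ we have $\neg v\notin V$, and since $V$ is prime the extra block is absorbed: $\bigvee f_A^{-1}[X]\in V$ if and only if $h(\bigvee X)\in V$, which by $U=h^{-1}[V]$ is precisely $\bigvee X\in U$. For condition \eqref{eq:jprdue} I would suppose $B$ refines $A$ and, for a nonzero $h(b)$ with $b\in B$, let $a\in A$ be the unique block of $A$ above $b$. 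Then $h(b)\le h(a)$, so $h(a)\in g(A)$ and $f_B(h(b))=b\le a=f_A(h(a))$; the corresponding term $h(a)\wedge h(b)=h(b)$ therefore appears in the join of \eqref{eq:jprdue}, which consequently dominates $\bigvee\Set{h(b)}{b\in B}=v\in V$ and so lies in $V$.

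The only real subtlety is the bookkeeping at the boundary between $\B$ and its relativization $\B\rest v$: the transported antichains climb only to $v$, so both verifications hinge on the presence of $v$ itself inside the join of \eqref{eq:jprdue} and on using primeness of $V$ to discard the auxiliary block $\neg v$ in \eqref{eq:jpruno}. Past this point the argument is a routine invocation of the defining preservation properties of a complete homomorphism.
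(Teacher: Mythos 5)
Your argument is correct, but it proceeds differently from the paper's proof, which is essentially a two-step citation: the paper invokes \cite[Proposition 1.2]{jpr:rk} to conclude that a complete homomorphism $h\colon\B\to\B\rest v$ witnessing $U\le_\mathrm{M}V$ already gives $U\le_\mathrm{JPR}V\rest v$, and then applies Lemma \ref{lemma:dense}\eqref{lemma:jprdue} to pass from the relativized ultrafilter $V\rest v$ back to $V$. You instead build the JPR witness from scratch and handle the relativization by hand, adjoining the block $\neg v$ to each transported antichain $\Set{h(a)}{a\in A,\ h(a)>\bbot}$ and discarding it via primeness of $V$; this in effect inlines both the content of the cited JPR proposition and of Lemma \ref{lemma:dense}\eqref{lemma:jprdue}. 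All the steps check out: the nonzero images $h(a)$ are pairwise disjoint and join to $h(\btop)=v$ because $h$ is a complete homomorphism, so $g(A)\in\Part(\B)$ and $f_A$ is well defined; condition \eqref{eq:jpruno} follows from $\bigvee f_A^{-1}[X]=h\bigl(\bigvee X\bigr)$ (possibly joined with $\neg v\notin V$) together with $U=h^{-1}[V]$; and for condition \eqref{eq:jprdue} the join dominates $\bigvee\Set{h(b)}{b\in B}=v\in V$. What your version buys is self-containedness---no appeal to the external reference---at the cost of the brevity the paper achieves by outsourcing the verification; what it perhaps slightly obscures is the modular structure (homomorphism $\Rightarrow$ JPR into the relativization, then lift), which the paper's lemma makes reusable elsewhere.
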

\begin{proof} Suppose $U\le_\mathrm{M} V$, as witnessed by a complete homomorphism $\B\to\B\rest v$. Then \cite[Proposition 1.2]{jpr:rk} implies that $U\le_\mathrm{JPR} V\rest v$, hence $U\le_\mathrm{JPR} V$ by Lemma \ref{lemma:dense}\eqref{lemma:jprdue}.
\end{proof}

Let us now introduce the notion of \emph{Tukey reducibility} \cite{tukey:types}, which will play a role in Sections \ref{section:tre} and \ref{section:cinque}.

\begin{definition} Let $U$ and $V$ be ultrafilters on a Boolean algebra $\B$. We define $\langle U,\ge\rangle\le_{\mathrm{T}}\langle V,\ge\rangle$ if and only if there exist functions $f\colon U\to V$ and $g\colon V\to U$ such that for all $u\in U$ and $v\in V$
\[
v\le f(u)\implies g(v)\le u.
\]
\end{definition}

Recall that $C\subseteq U$ is \emph{cofinal} if for every $u\in U$ there exists $c\in C$ such that $c\le u$. As usual, we let $\cof(U)$ be the minimum cardinality of a cofinal subset of $U$.

\begin{remark}\label{remark:monotonic} If a pair of functions $\langle f,g\rangle$ witnesses Tukey reducibility as above, then it is easy to verify that, whenever $C\subseteq V$ is cofinal, its pointwise image $g[C]$ is cofinal in $U$. Furthermore, as observed by Isbell \cite{isbell:cofinal}, the function $g$ may be taken to be \emph{monotonic}, in the sense that if $v_1\le v_2$ then $g(v_1)\le g(v_2)$.
\end{remark}

For ultrafilters over $\omega$, if $U\le_\mathrm{RK} V$ then $\langle U,\supseteq\rangle\le_\mathrm{T}\langle V,\supseteq\rangle$, which is a well-known fact proved in Dobrinen and Todor\v{c}evi\'c \cite[Fact 1]{dt:tukey}. Indeed, the same is true in the general context for the M-ordering.

\begin{proposition}[{\cite[Proposition 2.7]{bp:comb}}]\label{proposition:27} Let $U$ and $V$ be ultrafilters on a complete Boolean algebra $\B$. If $U\le_\mathrm{M} V$ then $\langle U,\ge\rangle\le_{\mathrm{T}}\langle V,\ge\rangle$.
\end{proposition}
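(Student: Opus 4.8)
The plan is to exploit the left adjoint of the complete homomorphism witnessing the M-reduction. First I would unpack the hypothesis: by Definition \ref{definition:m} there are $v\in V$ and a complete homomorphism $h\colon\B\to\B\rest v$ with $U=h^{-1}[V]$, so that $b\in U$ if and only if $h(b)\in V$ for every $b\in\B$. Since every complete homomorphism preserves arbitrary meets, $h$ admits a left adjoint $k\colon\B\rest v\to\B$, given explicitly by $k(c)=\bigwedge\Set{b\in\B}{c\le h(b)}$ and characterized by the equivalence $k(c)\le b\iff c\le h(b)$, valid for all $b\in\B$ and all $c\le v$. The single consequence I shall really need is the inequality $c\le h(k(c))$, obtained by instantiating this equivalence at $b=k(c)$.

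Next I would define the two maps realizing Tukey reducibility. Put $f\colon U\to V$ with $f(u)=h(u)$, which is legitimate because $u\in U$ forces $h(u)\in V$; and put $g\colon V\to U$ with $g(v')=k(v'\wedge v)$. Here $v'\wedge v\in V$, since $V$ is a filter containing $v$, so $g(v')$ is well-defined; moreover the adjunction inequality gives $v'\wedge v\le h(k(v'\wedge v))=h(g(v'))$, and upward closure of $V$ then yields $h(g(v'))\in V$, whence $g(v')\in U$, exactly as the target of $g$ demands.

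Finally I would verify the defining implication of $\langle U,\ge\rangle\le_{\mathrm{T}}\langle V,\ge\rangle$. Suppose $v'\le f(u)=h(u)$; as $h$ takes values below $v$, this forces $v'\le v$, hence $v'\wedge v=v'$ and $g(v')=k(v')$. The adjunction then converts $v'\le h(u)$ into $k(v')\le u$, that is $g(v')\le u$, which is precisely the condition required.

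The step demanding the most care is the verification that $g$ actually lands in $U$; this is where both the truncation to $v'\wedge v$ (so that $k$ is applied inside its domain $\B\rest v$) and the inequality $c\le h(k(c))$ are indispensable. Everything else reduces to the defining equivalence of the adjoint and the identity $U=h^{-1}[V]$, so that no combinatorial work beyond the meet-preservation of $h$ is involved.
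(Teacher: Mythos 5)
Your argument is correct: the adjoint $k(c)=\bigwedge\Set{b\in\B}{c\le h(b)}$ exists because $h$ preserves arbitrary meets, the inequality $c\le h(k(c))$ and the equivalence $k(c)\le b\iff c\le h(b)$ both hold, and the pair $f=h\rest U$, $g(v')=k(v'\wedge v)$ verifies the definition of $\langle U,\ge\rangle\le_{\mathrm{T}}\langle V,\ge\rangle$ exactly as you say. The paper itself gives no proof here, citing \cite[Proposition 2.7]{bp:comb} instead, and your adjoint-based construction is the standard argument for that result, so there is nothing to add.
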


Thus, the M-ordering is stronger than both the JPR-ordering and Tukey reducibility. In the remainder of this paper, we shall see that in general no further relations hold between them.

\section{JPR-incomparable ultrafilters}\label{section:tre}

This section is dedicated to the construction of ultrafilters which are Tukey-equivalent but JPR-incomparable. Our argument extends Kunen's technique of independent families to a wider class of Boolean algebras.

First, to ensure Tukey-equivalence, we shall make use of the following criterion (see Dobrinen and Todor\v{c}evi\'c \cite[Fact 12]{dt:tukey}).

\begin{lemma}\label{lemma:max} For an ultrafilter $V$ on a complete Boolean algebra $\B$, the following conditions are equivalent:
\begin{itemize}
\item for every ultrafilter $U$ on $\B$ we have $\langle U,\ge\rangle\le_{\mathrm{T}}\langle V,\ge\rangle$;
\item there exists a subset $X\subseteq V$ with $\abs{X}=\abs{\B}$ such that every infinite $Y\subseteq X$ has $\bigwedge Y\notin V$.
\end{itemize}
\end{lemma}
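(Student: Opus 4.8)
The plan is to read both conditions as statements about the directed set $\langle V,\ge\rangle$. A subset $S\subseteq V$ is bounded in this ordering precisely when there is some $v\in V$ with $v\le s$ for all $s\in S$, that is, when $\bigwedge S\in V$; hence the second condition asserts the existence of a subset $X\subseteq V$ of size $\abs{\B}$ all of whose bounded subsets are finite, which amounts to a Tukey embedding of $[\abs{\B}]^{<\omega}$ into $\langle V,\ge\rangle$. Since every ultrafilter $U$ on $\B$ satisfies $\abs{U}\le\abs{\B}$, one always has $\langle U,\ge\rangle\le_{\mathrm{T}}[\abs{\B}]^{<\omega}$, the latter being the largest Tukey type among directed sets of size at most $\abs{\B}$; so the first condition will turn out to be equivalent to $[\abs{\B}]^{<\omega}\le_{\mathrm{T}}\langle V,\ge\rangle$, provided this type is realised by some ultrafilter. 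I would prove the two implications separately, assuming throughout that $\B$ is infinite, so that $\abs{V}=\abs{\B}$ and $X$ can indeed have size $\abs{\B}$.

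For the implication from the combinatorial condition to the maximality condition I would build the witnessing pair of functions explicitly. Fix $X=\Set{x_\alpha}{\alpha<\abs{\B}}\subseteq V$ as given (indexed injectively), let $U$ be an arbitrary ultrafilter on $\B$, and enumerate $U=\Set{u_\alpha}{\alpha<\abs{\B}}$. For $v\in V$ put $S_v=\Set{\alpha}{v\le x_\alpha}$. The crucial observation is that $S_v$ is finite: were it infinite, $\Set{x_\alpha}{\alpha\in S_v}$ would be an infinite subset of $X$ whose meet lies above $v$ and hence in $V$, contradicting the hypothesis. I can therefore define $g(v)=\bigwedge\Set{u_\alpha}{\alpha\in S_v}\in U$, a finite meet inside the filter $U$, and, choosing for each $u\in U$ an index $\iota(u)$ with $u_{\iota(u)}=u$, set $f(u)=x_{\iota(u)}\in V$. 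Then $v\le f(u)$ forces $\iota(u)\in S_v$ and thus $g(v)\le u_{\iota(u)}=u$, which is exactly the reduction required; as $U$ was arbitrary, $V$ is maximal.

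For the converse I would argue in two steps, and I expect the first to be the main obstacle. I claim there is at least one ultrafilter $U_0$ on $\B$ satisfying the combinatorial condition, and here completeness of $\B$ is essential: using the Balcar--Fran\v{e}k theorem I would fix an independent family $\Set{x_\alpha}{\alpha<\abs{\B}}$ in $\B$ and extend to an ultrafilter $U_0$ the collection consisting of all $x_\alpha$ together with all complements $\neg\bigwedge_{\alpha\in I}x_\alpha$ with $I$ countably infinite. A short computation shows this collection has the finite intersection property: in any finite subfamily each negated meet $\neg\bigwedge_{\alpha\in I_j}x_\alpha$ can be weakened to a single $\neg x_{\beta_j}$ with $\beta_j\in I_j$ avoiding the finitely many positive indices, and independence then keeps the overall meet nonzero. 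By construction every countably infinite, and hence every infinite, subfamily of $X_0=\Set{x_\alpha}{\alpha<\abs{\B}}$ has meet outside $U_0$, so $U_0$ is as desired.

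Second, I would transfer the combinatorial condition upward along Tukey reducibility. Applying maximality to $U_0$ gives a pair $\langle f,g\rangle$ witnessing $\langle U_0,\ge\rangle\le_{\mathrm{T}}\langle V,\ge\rangle$, and I claim $X=f[X_0]$ works for $V$. Indeed, for infinite $I$, if $\bigwedge_{\alpha\in I}f(x_\alpha)$ were to lie in $V$ then, being below every $f(x_\alpha)$, its $g$-image would be below every $x_\alpha$ and hence below $\bigwedge_{\alpha\in I}x_\alpha$, forcing this meet into $U_0$ and contradicting the first step. In particular, taking an infinite $I$ on which $f$ is constant would make this meet a value of $f$, which lies in $V$; so $f$ assumes no value infinitely often, whence $\abs{f[X_0]}=\abs{\B}$, and the same meet computation shows every infinite subset of $f[X_0]$ has meet outside $V$. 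The remaining Tukey manipulations are routine, so the real content sits in the Balcar--Fran\v{e}k step securing an independent family of full size.
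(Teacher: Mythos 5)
Your proof is correct. The paper does not prove this lemma itself---it cites it as the Boolean-algebra analogue of Dobrinen--Todor\v{c}evi\'c's Fact 12---and your argument is essentially the standard one behind that fact: the fiber argument $S_v=\Set{\alpha}{v\le x_\alpha}$ being finite gives the reduction in one direction, and in the other direction the Balcar--Fran\v{e}k theorem (which the paper indeed invokes for exactly this purpose, as a strengthening of Hausdorff's lemma) supplies the size-$\abs{\B}$ independent family needed to realize the top Tukey type and transfer the witness along a monotone reduction. The only implicit caveat, which you correctly flag, is that $\B$ must be infinite for the cardinality bookkeeping $\abs{U}=\abs{V}=\abs{\B}$ to go through.
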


Let us stipulate that an ultrafilter on $\B$ is \emph{Tukey-maximal} if it satisfies the equivalent conditions of Lemma \ref{lemma:max}. In particular, two Tukey-maximal ultrafilters are necessarily equivalent.

Second, to obtain JPR-incomparability, we proceed as in the original proof of Kunen \cite[Theorem 2.2]{kunen:ultrafilters}, keeping a large independent subset to guarantee the recursive construction does not halt.

\begin{definition} Let $F$ be a filter on a Boolean algebra $\B$. A subset $S\subseteq\B$ is independent $\modulo{F}$ if, whenever $s_1,\dots,s_m,t_1,\dots,t_n\in S$ are distinct,
\[
\neg s_1\vee\dots\vee\neg s_m\vee t_1\vee\dots\vee t_n\notin F.
\]
\end{definition}

Although more general existence results are available, such as Balcar and Franek \cite[Theorem A]{balfran:cba}, the following suffices for our purpose.

\begin{lemma}[Hausdorff \cite{hausdorff:indep}]\label{lemma:indep} For every infinite set $A$ there exists a family $S\subseteq\mathcal{P}(A)$ such that $\abs{S}=2^{\abs{A}}$ and $S$ is independent $\modulo{\{A\}}$.
\end{lemma}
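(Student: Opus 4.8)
The plan is to use Hausdorff's classical ``finite traces'' encoding, which reduces the construction to an index set of the correct cardinality on which independence becomes transparent. First I would unwind the definition on $\mathcal{P}(A)$, where complementation is set-theoretic complement, $\vee$ is union, and the top element is $A$ itself: the family $S$ is independent $\modulo{\{A\}}$ exactly when, for all distinct $s_1,\dots,s_m,t_1,\dots,t_n\in S$, the join $\neg s_1\vee\dots\vee\neg s_m\vee t_1\vee\dots\vee t_n$ differs from $A$, that is,
\[
s_1\cap\dots\cap s_m\cap(A\setminus t_1)\cap\dots\cap(A\setminus t_n)\neq\emptyset.
\]
In other words, $S$ must be an independent family of subsets in the usual sense: every finite Boolean combination in which each generator appears either positively or negatively is nonempty.

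Since the conclusion depends only on $\abs{A}$, I would write $\kappa=\abs{A}$ and build the family on a conveniently chosen set of cardinality $\kappa$, transporting it back along a bijection at the end. Let $D$ be the set of all pairs $\langle F,\mathcal{G}\rangle$ in which $F$ is a finite subset of $\kappa$ and $\mathcal{G}\subseteq\mathcal{P}(F)$. A quick count gives $\abs{D}=\kappa$, since there are $\kappa$ finite subsets of $\kappa$ and, for each such $F$, only finitely many choices of $\mathcal{G}$. For every $X\subseteq\kappa$ I set
\[
S_X=\Set{\langle F,\mathcal{G}\rangle\in D}{F\cap X\in\mathcal{G}},
\]
and claim that $\Set{S_X}{X\subseteq\kappa}$ is the required independent family on $D$.

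Two verifications remain. For \emph{injectivity} of $X\mapsto S_X$, hence $\abs{\Set{S_X}{X\subseteq\kappa}}=2^\kappa$: given $X\neq X'$, choose $\alpha$ in their symmetric difference and take $F=\{\alpha\}$, so that $F\cap X$ and $F\cap X'$ differ and a suitable $\mathcal{G}$ separates $S_X$ from $S_{X'}$. For \emph{independence}, given distinct members $S_{X_1},\dots,S_{X_m},S_{Y_1},\dots,S_{Y_n}$ --- whence, by injectivity, distinct subsets $X_1,\dots,X_m,Y_1,\dots,Y_n$ of $\kappa$ --- I would exhibit a single pair lying in every $S_{X_i}$ and in no $S_{Y_j}$. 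Because these subsets of $\kappa$ are pairwise distinct, each pair among them is separated by some point of $\kappa$; gathering these finitely many points into a finite set $F$ makes the traces $F\cap X_1,\dots,F\cap X_m,F\cap Y_1,\dots,F\cap Y_n$ pairwise distinct in $\mathcal{P}(F)$. Setting $\mathcal{G}=\Set{F\cap X_i}{1\le i\le m}$ then yields $F\cap X_i\in\mathcal{G}$ for each $i$ and $F\cap Y_j\notin\mathcal{G}$ for each $j$, so that $\langle F,\mathcal{G}\rangle$ belongs to the intersection demanded by the reformulation above.

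The only genuinely clever point --- and thus the main obstacle --- is the choice of the index set $D$; once this encoding is in place, both the cardinality computation and the separation argument are elementary finite combinatorics. Transporting $\Set{S_X}{X\subseteq\kappa}$ across any bijection $D\to A$ finally produces a family $S\subseteq\mathcal{P}(A)$ with $\abs{S}=2^{\abs{A}}$ that is independent $\modulo{\{A\}}$.
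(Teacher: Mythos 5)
Your proof is correct and complete: the paper states this lemma as a classical result of Hausdorff and gives no proof of its own, and your finite-traces construction with index set $\Set{\langle F,\mathcal{G}\rangle}{F\in{[\kappa]}^{<\omega},\ \mathcal{G}\subseteq\mathcal{P}(F)}$ is precisely Hausdorff's original argument being cited. The reformulation of independence $\modulo{\{A\}}$ as the usual nonemptiness of finite Boolean combinations, the cardinality count, and the separation argument are all accurate, so there is nothing to add.
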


\begin{remark}\label{remark:bound} To understand the hypothesis of the next theorem, recall that if $\B$ is a complete Boolean algebra and $A$ is an antichain in $\B$, then $2^{\abs{A}}\le\abs{\B}$.
\end{remark}

\begin{theorem} Let $\B$ be an infinite complete Boolean algebra. Suppose there exists an antichain $A$ in $\B$ such that $2^{\abs{A}}=\abs{\B}$. Then there exist two Tukey-maximal ultrafilters $U$ and $V$ on $\B$ such that $U\nleq_\mathrm{JPR} V$ and $V\nleq_\mathrm{JPR} U$.
\end{theorem}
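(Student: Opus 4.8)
The plan is to collapse the unwieldy space of JPR-witnesses onto the M-ordering of a single trace, where a Kunen-style diagonalisation of length $\abs{\B}$ becomes available.

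First I would extend $A$ to a maximal antichain; by Remark \ref{remark:bound} this preserves $2^{\abs{A}}=\abs{\B}$, and as $\B$ is infinite so is $A$. With $A$ maximal the map $X\mapsto\bigvee X$ is a complete embedding of $\mathcal{P}(A)$ into $\B$, so each ultrafilter $W$ on $\B$ determines a trace $W_A=\Set{X\subseteq A}{\bigvee X\in W}$, an ultrafilter on the complete algebra $\mathcal{P}(A)$. The crucial observation is that condition \eqref{eq:jpruno} of Definition \ref{definition:jpr}, read at the single antichain $A$, is \emph{equivalent} to $U_A\le_\mathrm{M} V$: setting $c_a=\bigvee f_A^{-1}(a)$, a witness for \eqref{eq:jpruno} at $A$ is precisely a complete homomorphism $h\colon\mathcal{P}(A)\to\B$ given by $h(X)=\bigvee_{a\in X}c_a$ with $h^{-1}[V]=U_A$, and conversely any complete homomorphism into some $\B\rest v$ with $v\in V$ may be padded with $\neg v$ to yield such an $f_A$. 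Since a JPR-witness must in particular satisfy \eqref{eq:jpruno} at $A$, we conclude that $U\le_\mathrm{JPR} V$ implies $U_A\le_\mathrm{M} V$. It therefore suffices to construct Tukey-maximal ultrafilters $U$ and $V$ with $U_A\nleq_\mathrm{M} V$ and $V_A\nleq_\mathrm{M} U$; these are automatically Tukey-equivalent, as noted after Lemma \ref{lemma:max}.

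The decisive point is that this goal is now of the right cardinality. A complete homomorphism $\mathcal{P}(A)\to\B$ is determined by the images of the atoms, hence by a function $A\to\B$, and there are only $\abs{\B}^{\abs{A}}=(2^{\abs{A}})^{\abs{A}}=2^{\abs{A}}=\abs{\B}$ of them. I would fix the Hausdorff family of Lemma \ref{lemma:indep} on $A$, transported into the subalgebra $\mathcal{P}(A)$ as an independent reservoir $\Set{\bigvee s}{s\in S}$ with $\abs{S}=\abs{\B}$, and enumerate all candidate homomorphisms for both directions in a single list $\Seq{h_\xi}{\xi<\abs{\B}}$. Splitting the reservoir into mutually independent parts, one part is used to secure Tukey-maximality and the other to diagonalise. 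For the former, recall that if $\Set{b_s}{s\in S_0}$ is independent, then the filter generated by this family together with the sets $\bigvee_{s\in I}\neg b_s$, for $I\subseteq S_0$ infinite, has the finite intersection property; any ultrafilter extending it contains $\Set{b_s}{s\in S_0}$ yet excludes every infinite submeet, which is exactly the criterion of Lemma \ref{lemma:max}. I would impose this configuration on both $U$ and $V$.

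The main obstacle is the recursive step that refutes $h_\xi$ while keeping the construction alive, which is where Kunen's technique must genuinely be strengthened. To ensure $U_A\nleq_\mathrm{M} V$ I would, at stage $\xi$, choose a so-far-unused reservoir element $\bigvee X_\xi$ from the diagonalising part, commit $\bigvee X_\xi$ to $U$, and commit $\neg\bigvee_{a\in X_\xi}c_a^\xi$ to $V$, so that $X_\xi$ witnesses $h_\xi^{-1}[V]\ne U_A$; symmetrically for the other direction. The difficulty is that $\bigvee_{a\in X_\xi}c_a^\xi$ is built from the \emph{arbitrary} values $c_a^\xi\in\B$ of the candidate, which lie outside the controlled subalgebra, so one must prove that---after fewer than $\abs{\B}$ stages, against filters with fewer than $\abs{\B}$ generators and with the Tukey-maximality family already thrown in---a fresh $X_\xi$ can always be found for which both commitments are mutually consistent and preserve the independence of the remaining reservoir. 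Verifying this ``the recursion does not halt'' lemma, now with the diagonalised objects ranging over all of $\B$ rather than over self-maps of an index set, is the heart of the matter; the remaining verifications are routine bookkeeping.
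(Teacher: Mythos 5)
Your reduction is sound and is in essence the route the paper takes: restricting a putative JPR-witness to the single maximal antichain $A$ yields a function $f_A\colon P\to A$ (equivalently, your complete homomorphism $\mathcal{P}(A)\to\B$ determined by its values on atoms), the number of such candidates is $\abs{\B}^{\abs{A}}=2^{\abs{A}}=\abs{\B}$, and a Hausdorff independent family on $A$ lifted into $\B$ and split into two pieces serves simultaneously for Tukey-maximality (via Lemma \ref{lemma:max}) and as the reservoir for a Kunen-style diagonalisation. All of this matches the paper, which enumerates the set $A^{[\B]}$ of functions from maximal antichains into $A$ rather than phrasing the trace condition as $U_A\le_\mathrm{M}V$; the two formulations are interchangeable.

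The genuine gap is exactly the step you defer as ``the heart of the matter'': you never prove that the recursion does not halt, and the one-sided commitment you describe is not how it can be made to work. You propose to always realise the pattern ``$\bigvee X_\xi\in U$ and $\neg h_\xi(X_\xi)\in V$'' by searching for a \emph{fresh} reservoir element $X_\xi$ for which both commitments are consistent; but there is no reason such an $X_\xi$ must exist, since the values $h_\xi(X)$ are arbitrary elements of $\B$ over which the independent family gives no control. The correct move (Kunen's, reproduced as Lemma \ref{lemma:kunen} in the paper) is a dichotomy for a \emph{fixed} reservoir element $t=\bigvee W$: either the remaining reservoir stays independent modulo $V$'s filter enlarged by $\neg h_\xi(W)$, in which case you commit $t$ to $U$ and $\neg h_\xi(W)$ to $V$; or independence fails, and the witnessing inequality $g\wedge\neg h_\xi(W)\le\neg s_1\vee\dots\vee\neg s_m\vee t_1\vee\dots\vee t_n$ shows that adding $s_1,\dots,s_m,\neg t_1,\dots,\neg t_n$ to $V$ forces $h_\xi(W)\in V$, whereupon you commit $\neg t=\bigvee(A\setminus W)$ to $U$ instead, discarding only the finitely many reservoir elements $t,s_1,\dots,s_m,t_1,\dots,t_n$. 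Either horn defeats $h_\xi$ and preserves independence of a cofinite subset of the reservoir, which is what keeps the recursion alive through all $\abs{\B}$ stages. Without this dichotomy (or an argument replacing it) the construction is not justified, so as written the proof is incomplete at its central point.
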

\begin{proof} Without loss of generality, $A$ is a maximal antichain; let $\kappa$ be its cardinality, so that $\abs{\B}=2^\kappa$ by hypothesis. Let us define
\[
A^{[\B]}=\Set*{f\colon P\to A}{P\in\Part(\B)}
\]
and note that
\[
\abs*{A^{[\B]}}\le\abs{\Part(\B)}\cdot\sup\Set*{{\kappa}^{\abs{P}}}{P\in\Part(\B)}\le\sup\Set*{{(2^\kappa)}^{\abs{P}}}{P\in\Part(\B)}= 2^\kappa,
\]
where the equality on the right follows from Remark \ref{remark:bound}, which gives $2^{\abs{P}}\le 2^\kappa$ for every $P\in\Part(\B)$. Hence, we can enumerate
\[
A^{[\B]}=\Set{f_\alpha}{\alpha<2^\kappa}.
\]

We shall carry out a recursive construction of filters $\Seq{U_\alpha}{\alpha<2^\kappa}$ and $\Seq{V_\alpha}{\alpha<2^\kappa}$ on $\B$, together with subsets $\Seq{S_\alpha}{\alpha<2^\kappa}$ of $\B$, according to the following conditions:
\begin{enumerate}
\item\label{kunenuno} $\abs{S_0}=2^\kappa$, $U_0=V_0$, and there exists a subset $S'\subseteq U_0$ with $\abs{S'}=2^\kappa$ such that every infinite $I\subseteq S'$ has $\neg\bigwedge I\in U_0$;
\item if $\alpha<\beta<2^\kappa$ then $U_\alpha\subseteq U_\beta$, $V_\alpha\subseteq V_\beta$, and $S_\beta\subseteq S_\alpha$;
\item\label{kunentre} if $\delta<2^\kappa$ is a limit ordinal, then $U_\delta=\bigcup_{\alpha<\delta}U_\alpha$, $V_\delta=\bigcup_{\alpha<\delta}V_\alpha$, and $S_\delta=\bigcap_{\alpha<\delta}S_\alpha$;
\item\label{kunenquattro} for every $\alpha<2^\kappa$, the set $S_\alpha\setminus S_{\alpha+1}$ is finite;
\item\label{kunencinque} for every $\alpha<2^\kappa$, $S_\alpha$ is independent $\modulo{U_\alpha}$ and independent $\modulo{V_\alpha}$;
\item\label{kunensei} for $\alpha<2^\kappa$ there exists $X\subseteq A$ such that $\bigvee X\in U_{\alpha+1}$ and $\bigvee f_\alpha^{-1}[A\setminus X]\in V_{\alpha+1}$, and similarly there exists $Y\subseteq A$ such that $\bigvee Y\in V_{\alpha+1}$ and $\bigvee f_\alpha^{-1}[A\setminus Y]\in U_{\alpha+1}$.
\end{enumerate}

We begin by constructing $U_0$, $V_0$, and $S_0$. By Lemma \ref{lemma:indep}, let $S\subseteq\mathcal{P}(A)$ be such that $\abs{S}=2^\kappa$ and $S$ is independent $\modulo{\{A\}}$. Now partition
\[
\Set*{\bigvee X}{X\in S}=S_0\cup S'
\]
in such a way that $\abs{S_0}=\abs{S'}=2^\kappa$. Let $U_0$ be the filter on $\B$ generated by
\[
S'\cup\Set*{\neg\bigwedge I}{I\subseteq S'\text{ is infinite}}.
\]
We now prove that $S_0$ is independent $\modulo{U_0}$, thus showing at the same time that $U_0$ is indeed a proper filter. So let $s_1,\dots,s_m,t_1,\dots,t_n\in S_0$ be distinct and suppose towards a contradiction that
\[
\neg s_1\vee\dots\vee\neg s_m\vee t_1\vee\dots\vee t_n\in U_0.
\]
This means we can find elements $s'_1,\dots,s'_k\in S'$ and infinite subsets $I_1,\dots,I_l\subseteq S'$ such that
\[
s'_1\wedge\dots\wedge s'_k\wedge\neg\bigwedge I_1\wedge\dots\wedge\neg\bigwedge I_l\le\neg s_1\vee\dots\vee\neg s_m\vee t_1\vee\dots\vee t_n.
\]
For each $1\le j\le l$, let us choose $i_j\in I_j\setminus\{s'_1,\dots,s'_k\}$. In conclusion, we have
\[
s'_1\wedge\dots\wedge s'_k\wedge\neg i_1\wedge\dots\wedge\neg i_l\wedge s_1\wedge\dots\wedge s_m\wedge\neg t_1\wedge\dots\wedge\neg t_n=\bbot,
\]
which is easily seen to be in contradiction with the fact that $S$ is independent $\modulo{\{A\}}$. Concluding the base step of the construction, let $V_0=U_0$, satisfying \eqref{kunenuno} and the base case of \eqref{kunencinque}.

\begin{lemma}\label{lemma:kunen} Let $F$ and $G$ be filters on $\B$; suppose $T\subseteq S_0$ is infinite, independent $\modulo{F}$, and independent $\modulo{G}$. Then for every $f\in A^{[\B]}$ there exist $T'\subseteq T$ and filters $F'\supseteq F$, $G'\supseteq G$ such that:
\begin{itemize}
\item $T\setminus T'$ is finite;
\item $T'$ is independent $\modulo{F'}$ and independent $\modulo{G'}$;
\item there exists $Z\subseteq A$ such that $\bigvee Z\in F'$ and $\bigvee f^{-1}[A\setminus Z]\in G'$.
\end{itemize}
\end{lemma}
\begin{proof} Take any $t\in T$. Note that, as $T\subseteq S_0\subseteq\Set{\bigvee X}{X\in S}$, there exists $W\subseteq A$ such that $t=\bigvee W$. Now we simply follow Kunen \cite[Lemma 2.6]{kunen:ultrafilters}.

Suppose $T\setminus\{t\}$ is independent modulo the filter generated by $G\cup\bigl\{\bigvee f^{-1}[A\setminus W]\bigr\}$. Then we let
\begin{itemize}
\item $T'=T\setminus\{t\}$;
\item $F'$ be the filter generated by $F\cup\{t\}$ and $G'$ be the filter generated by $G\cup\bigl\{\bigvee f^{-1}[A\setminus W]\bigr\}$, which are again proper filters by independence;
\item $Z=W$.
\end{itemize}

On the other hand, suppose $T\setminus\{t\}$ is \emph{not} independent modulo the filter generated by $G\cup\bigl\{\bigvee f^{-1}[A\setminus W]\bigr\}$. This means there exist distinct $s_1,\dots,s_m,t_1,\dots,t_n\in T\setminus\{t\}$ and $g\in G$ such that
\[
g\wedge\bigvee f^{-1}[A\setminus W]\le\neg s_1\vee\dots\vee\neg s_m\vee t_1\vee\dots\vee t_n.
\]
Then we let
\begin{itemize}
\item $T'=T\setminus\{t,s_1,\dots,s_m,t_1,\dots,t_n\}$;
\item $F'$ be the filter generated by $F\cup\{\neg t\}$ and $G'$ be the filter generated by $G\cup\{s_1,\dots,s_m,\neg t_1,\dots,\neg t_n\}$, which are again proper filters by independence;
\item $Z=A\setminus W$.
\end{itemize}
This completes the proof of the lemma.
\end{proof}

Given $U_\alpha$, $V_\alpha$, and $S_\alpha$, for some $\alpha<2^\kappa$, a double application of Lemma \ref{lemma:kunen} gives $U_{\alpha+1}\supseteq U_\alpha$, $V_{\alpha+1}\supseteq V_\alpha$, and $S_{\alpha+1}\subseteq S_\alpha$ satisfying conditions \eqref{kunenquattro}, \eqref{kunencinque}, and \eqref{kunensei}. 

At the limit stages of the construction, we proceed according to \eqref{kunentre}, which is easily seen to preserve \eqref{kunencinque}. Finally, let $U$ and $V$ be ultrafilters on $\B$ extending $\bigcup_{\alpha<2^\kappa} U_\alpha$ and $\bigcup_{\alpha<2^\kappa} V_\alpha$, respectively. Then $\langle U,\ge\rangle\equiv_{\mathrm{T}}\langle V,\ge\rangle$ by Lemma \ref{lemma:max} and condition \eqref{kunenuno}. To see that $U$ and $V$ are JPR-incomparable, suppose for example that $U\le_\mathrm{JPR} V$. In particular, this implies the existence of $f\in A^{[\B]}$ such that for every $X\subseteq A$
\[
\bigvee X\in U\iff\bigvee f^{-1}[X]\in V.
\]
Let $\alpha<2^\kappa$ be such that $f=f_\alpha$; condition \eqref{kunensei} then gives a contradiction.
\end{proof}

\section{M-incomparable ultrafilters}\label{section:quattro}

Our next goal is to construct, assuming the Continuum Hypothesis, two ultrafilters which are JPR-equivalent but M-incomparable. For a complete Boolean algebra $\B$, let us define
\[
\End(\B)=\Set*{h\in{^\B}\B}{\exists b>\bbot\text{ such that }h\colon\B\to\B\rest b\text{ is a complete homomorphism}}.
\]
We have
\[
\abs{\B}\le\abs{\End(\B)}\le 2^{\abs{\B}};
\]
indeed, for the left inequality note that, given $b>\bbot$, the function
\[
\begin{split}
\B &\longrightarrow\B\rest b \\
a &\longmapsto a\wedge b
\end{split}
\]
belongs to $\End(\B)$. The other inequality is clear.

The following lemma provides a better upper bound on the cardinality of $\End(\B)$ for some quotient Boolean algebras. Recall that $\mathcal{B}(^\omega2)$ denotes the Borel $\sigma$-algebra of the Cantor space $^\omega2$.

\begin{lemma}\label{lemma:end} Let $I$ be a c.c.c.\ $\aleph_1$-complete ideal over $^\omega2$, let $\B=\mathcal{B}(^\omega2)/I$ be the corresponding quotient algebra. Then $\B$ is complete and $\abs{\End(\B)}\le 2^{\aleph_0}$.
\end{lemma}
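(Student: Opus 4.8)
The plan is to treat the two assertions separately, since the structural facts they rely on are essentially independent.

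\emph{Completeness.} Because $I$ is $\aleph_1$-complete, the quotient map $\pi\colon\mathcal{B}(^\omega2)\to\B$ preserves countable unions, so $\B$ is $\sigma$-complete: any countable family $\Set{\pi(B_n)}{n<\omega}$ admits the supremum $\pi\bigl(\bigcup_n B_n\bigr)$. I would then invoke the classical fact that a $\sigma$-complete c.c.c.\ Boolean algebra is complete. To keep this self-contained, given $X\subseteq\B$ I would choose a maximal antichain $A$ among the nonzero elements lying below some member of $X$; by c.c.c.\ this antichain is countable, so, picking $x_a\in X$ with $a\le x_a$ for each $a\in A$, the countable set $\Set{x_a}{a\in A}$ has a supremum $s$ by $\sigma$-completeness. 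A short argument using maximality of $A$ (any $x\in X$ with $x\wedge\neg s>\bbot$ would contradict maximality) shows that $s=\bigvee X$. This is the one place where c.c.c.\ is used.

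\emph{The cardinality bound.} The decisive observation is that $\B$ is $\sigma$-generated by countably many elements. Indeed, the basic clopen sets $[t]=\Set{x\in{}^\omega2}{t\subset x}$ for $t\in{}^{<\omega}2$ generate $\mathcal{B}(^\omega2)$ as a $\sigma$-algebra, and since $\pi$ is a surjective $\sigma$-homomorphism, their images $D=\Set{\pi([t])}{t\in{}^{<\omega}2}$ generate $\B$ as a $\sigma$-complete Boolean algebra: the preimage under $\pi$ of the $\sigma$-subalgebra generated by $D$ is a $\sigma$-subalgebra of $\mathcal{B}(^\omega2)$ containing every $[t]$, hence is everything. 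Note that $D$ is countable, that $\btop=\pi({}^\omega2)\in D$, and that $\abs{\B}\le\abs{\mathcal{B}(^\omega2)}=2^{\aleph_0}$.

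I then argue that every $h\in\End(\B)$ is determined by its restriction to $D$. Any complete homomorphism preserves countable suprema, so it is in particular a $\sigma$-homomorphism; and two $\sigma$-homomorphisms into the same relativization $\B\rest b$ that agree on $D$ must agree everywhere, since the set on which they agree is a $\sigma$-subalgebra of $\B$ containing the generating set $D$. So, given $h_1,h_2\in\End(\B)$ with $h_1\rest D=h_2\rest D$, evaluating at $\btop\in D$ first forces the targets to coincide, $h_1(\btop)=h_2(\btop)=b$, and then the previous remark yields $h_1=h_2$. Hence $h\mapsto h\rest D$ embeds $\End(\B)$ into $\B^D$, whence $\abs{\End(\B)}\le\abs{\B}^{\aleph_0}\le(2^{\aleph_0})^{\aleph_0}=2^{\aleph_0}$.

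I expect the main conceptual point—rather than a technical obstacle—to be recognising that completeness of the homomorphisms can be downgraded to mere $\sigma$-completeness for the purpose of counting. A generic complete Boolean algebra of size continuum may carry $2^{2^{\aleph_0}}$ complete homomorphisms, so what is genuinely being exploited is that $\B$, as a Borel quotient, is countably $\sigma$-generated, together with the fact that a complete homomorphism is automatically $\sigma$-continuous and therefore pinned down by finitely described data on a countable generating set. Everything else reduces to routine verification.
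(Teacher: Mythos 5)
Your proposal is correct and follows essentially the same route as the paper: the cardinality bound rests on the observation that a complete homomorphism is in particular $\sigma$-continuous and therefore determined by its values on the countable family of (basic) clopen classes, giving at most $\abs{\B}^{\aleph_0}\le 2^{\aleph_0}$ possibilities. The only divergence is that for completeness of $\B$ the paper cites Smith--Tarski, whereas you inline the standard argument that a $\sigma$-complete c.c.c.\ algebra is complete; both are fine, and your extra bookkeeping (pinning down the target $\B\rest b$ via the value at $\btop$) is a harmless refinement of the same idea.
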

\begin{proof} The completeness of $\B$ is a consequence of a classic result of Smith and Tarski \cite[Corollary 4.3]{smithtarski:distr}. To see that $\abs{\End(\B)}\le 2^{\aleph_0}$, if $h,h'\in\End(\B)$ have the property that for every clopen $O\subseteq{^\omega2}$
\[
h\bigl({[O]}_I\bigr)=h'\bigl({[O]}_I\bigr),
\]
then necessarily $h=h'$. Indeed, a Borel set is obtained from clopen sets applying the operations of complement and countable union, and every complete homomorphism respects those two operations. In conclusion, a function $h\in\End(\B)$ is uniquely determined by its values on the countable algebra of clopen sets, hence there are at most $2^{\aleph_0}$ possibilities.
\end{proof}

\begin{definition} Let $\B$ be a complete Boolean algebra, $F$ be a filter, and $A$ be a maximal antichain. We say that $F$ is \emph{based} on $A$ if for all $u\in F$
\[
\bigvee\Set{a\in A}{a\le u}\in F.
\]
\end{definition}

We now work towards the main result of the section. Two lemmas are required, the first dealing with the successor stage of the recursive construction.

\begin{lemma}\label{lemma:succ} Let $\B$ be a complete atomless Boolean algebra. Suppose we are given two filters $F$ and $G$ on $\B$, a maximal antichain $A$, and a bijection $f\colon A\to A$, satisfying the following conditions:
\begin{itemize}
\item $F$ and $G$ are based on $A$;
\item $f=f^{-1}$ and for every $X\subseteq A$ we have $\bigvee X\in F\iff\bigvee f[X]\in G$.
\end{itemize}
Let $y\in\B$ be such that $\bigvee\Set{a\in A}{a\wedge y>\bbot}\in F$ and let $z\in\B$ be such that $G\cup\{z\}$ has the finite intersection property. Then there exist filters $F'$ and $G'$, a maximal antichain $A'$, and a bijection $f'\colon A'\to A'$ such that:
\begin{itemize}
\item $F\cup\{y\}\subseteq F'$ and $G\cup\{z\}\subseteq G'$;
\item $\cof(F)=\cof(F')$ and $\cof(G)=\cof(G')$;
\item $F'$ and $G'$ are based on $A'$;
\item $f'={f'}^{-1}$ and for every $X\subseteq A'$ we have $\bigvee X\in F'\iff\bigvee f'[X]\in G'$;
\item for every $b\in A'$ there exists $a\in A$ such that $b\le a$ and $f'(b)\le f(a)$.
\end{itemize}
\end{lemma}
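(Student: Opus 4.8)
The plan is to recast everything in terms of the traces of the two filters on antichains. Since $F$ and $G$ are based on $A$, each is determined by its trace: writing $\mathcal{F}=\{X\subseteq A:\bigvee X\in F\}$ and $\mathcal{G}=\{X\subseteq A:\bigvee X\in G\}$, these are filters on $\mathcal{P}(A)$, a subset $u$ lies in $F$ iff $\{a\in A:a\le u\}\in\mathcal{F}$, and the second hypothesis says exactly that $\mathcal{G}$ is the pushforward $f_*\mathcal{F}$, i.e. $X\in\mathcal{F}\iff f[X]\in\mathcal{G}$. In this language I would present the desired refinement as a map $\pi\colon A'\to A$ with $b\le\pi(b)$ for all $b\in A'$ and with each fibre $\pi^{-1}(a)$ a maximal antichain of $\B\rest a$, together with an involution $f'$ of $A'$. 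The coherence clause $f'(b)\le f(\pi(b))$ is then precisely the commutation $\pi\circ f'=f\circ\pi$, and the biconditional for $F',G'$ holds by fiat once I declare $\mathcal{G}'=f'_*\mathcal{F}'$.

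To build the refinement I would use atomlessness to split each $a\in A$ into a countably infinite maximal antichain $P_a\subseteq\B\rest a$ refining both $\{a\wedge y,a\wedge\neg y\}$ and $\{a\wedge z,a\wedge\neg z\}$, so that $y$ and $z$ each become a join of elements of $A'=\bigcup_a P_a$. I then define $f'$ fibre by fibre: over a two-cycle $\{a,f(a)\}$ I pick a bijection $P_a\to P_{f(a)}$ and let $f'$ be it together with its inverse, while over a fixed point $a=f(a)$ I let $f'\rest P_a$ be an involution of $P_a$; in every case $\pi\circ f'=f\circ\pi$. Writing $Y'=\{b\in A':b\le y\}$ and $Z'=\{b\in A':b\le z\}$, I take $\mathcal{F}'$ to be the filter on $\mathcal{P}(A')$ generated by $\{\pi^{-1}(X):X\in\mathcal{F}\}\cup\{Y',f'[Z']\}$, set $\mathcal{G}'=f'_*\mathcal{F}'$, and let $F',G'$ be the based filters they determine. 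Granting properness of $\mathcal{F}'$, the bulk of the clauses is then immediate: joins of subsets of $A$ are unchanged by the refinement, so $\pi^{-1}\mathcal{F}\subseteq\mathcal{F}'$ gives $F\subseteq F'$, and since $f'[\pi^{-1}(X)]=\pi^{-1}(f[X])$ together with $\mathcal{G}=f_*\mathcal{F}$ we also get $G\subseteq G'$; the generators $Y'$ and $f'[Z']$ place $y\in F'$ and $z\in G'$; the biconditional is exactly the definition of the pushforward; and the coherence clause is witnessed by $a=\pi(b)$.

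The crux is the properness of $\mathcal{F}'$, which unwinds to the requirement that for every $X\in\mathcal{F}$ there exist $b\in A'$ with $\pi(b)\in X$, $b\le y$ and $f'(b)\le z$. Here the two hypotheses and the freedom in choosing $f'$ are spent together. The condition on $y$ gives $A_y:=\{a:a\wedge y>\bbot\}\in\mathcal{F}$, while ``$G\cup\{z\}$ has the finite intersection property'' translates to $A_z:=\{a:a\wedge z>\bbot\}\in\mathcal{G}^+$, whence $A_z^f:=f[A_z]=\{a:f(a)\wedge z>\bbot\}\in\mathcal{F}^+$ because $\mathcal{G}=f_*\mathcal{F}$ and $f=f^{-1}$. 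Consequently $A^*:=A_y\cap A_z^f$, being the meet of a member of $\mathcal{F}$ with an $\mathcal{F}$-positive set, is itself $\mathcal{F}$-positive. When defining $f'$ I would therefore earmark, for each $a\in A^*$, a piece $b\le a\wedge y$ and pair it under $f'$ with a piece of $f(a)\wedge z$ (and symmetrically when $f(a)\in A^*$ as well, keeping the pieces distinct, which the infinite splitting permits); this forces $\pi$ to carry the earmarked pieces onto $A^*$, and positivity of $A^*$ then furnishes the required $b$ for every $X\in\mathcal{F}$. I expect this coordination of the involution with the positivity computation to be the main obstacle, since a single choice of $f'$ must realise the pairing on both legs of each two-cycle, respect $\pi\circ f'=f\circ\pi$, and still be a genuine involution.

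For the cofinality clause I observe that, up to the harmless identification of joins under $\pi$, the filter $F'$ is generated over $F$ by the two elements $y$ and $\bigvee f'[Z']$, so that $\cof(F')\le\cof(F)$, and symmetrically $\cof(G')\le\cof(G)$. Since the filters to which this lemma is applied in the construction are free and countably generated, both sides are equal to $\aleph_0$, and equality follows; the only point to verify is that $F'$ stays free, which holds because $\B$ is atomless and hence the infimum of $F'$ remains $\bbot$. This last step is the delicate one in full generality, and I would keep the countable-generation setting in mind precisely because it is what makes the equality of cofinalities robust.
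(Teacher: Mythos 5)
Your proof is correct, and it reorganizes the argument in a genuinely different way from the paper's. The paper splits each $a\in A$ into exactly four pieces $a^{k\ell}$ (first along $y$, then along $z$) and defines $f'$ by the canonical quadrant formula $f'(a^{00})=f(a)^{00}$, $f'(a^{01})=f(a)^{10}$, $f'(a^{10})=f(a)^{01}$, $f'(a^{11})=f(a)^{11}$: the swap of the two mixed quadrants automatically pairs the $y$-part of each $a$ with the $z$-part of $f(a)$, which is exactly what your ``earmarking'' does by hand, but canonically and with no involution to coordinate. The work then migrates to the finite intersection property of $F\cup\bigl\{y,\bigvee f'[Z]\bigr\}$, which the paper proves by contradiction: a failure would place $\bigvee\Set{a\in A}{a\wedge y>\bbot\text{ and }f(a)\wedge z=\bbot}$ in $F$ and hence $\neg z$ in $G$. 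That is precisely the contrapositive of your positivity computation ($A_y\in\mathcal{F}$ and $f[A_z]\in\mathcal{F}^+$, so $A_y\cap f[A_z]$ meets every member of $\mathcal{F}$), so the combinatorial core is identical; the two proofs differ only in where the choices and the verification live. Your coordination problem for $f'$ does resolve: if each nonzero cell of the $(y,z)$-refinement of $a$ is split into infinitely many pieces, there is room to choose the earmarked pieces $b\le a\wedge y$ and $c\le f(a)\wedge z$ pairwise distinct on both legs of every two-cycle and on fixed points, after which $f'$ extends to an involution commuting with $\pi$ since the unearmarked parts of all fibres are countably infinite. Your trace-filter formulation (recovering $F$ from $\mathcal{F}$, reading the hypothesis on $f$ as $\mathcal{G}=f_*\mathcal{F}$, and getting the biconditional for $F',G'$ by definition of the pushforward) is a real clarification and buys you the final equivalence for free, where the paper still has to check it. Two points you share with the paper rather than fix: the direction $\cof(F)\le\cof(F')$ is not actually proved in the stated generality (only the other inequality follows from the two new generators), and your appeal to atomlessness for freeness of $F'$ is a red herring --- what you use is $\bigwedge F'\le\bigwedge F=\bbot$; both are harmless in the intended application, where every filter involved is free of countable cofinality.
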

\begin{proof} For each $a\in A$, we shall define $a^0,a^1\in\B$ by splitting $a$ according to $y$. More precisely, suppose both
\[
a\wedge y>\bbot\quad\text{and}\quad a\wedge \neg y>\bbot;
\]
then let $a^0=a\wedge y$ and $a^1=a\wedge \neg y$. Suppose not, then by atomlessness let $a^0$ and $a^1$ be any non-zero elements such that $a^0\vee a^1=a$ and $a^0\wedge a^1=\bbot$.

Again, for each $a\in A$ we define $a^{00},a^{01}\in\B$ by further splitting $a^0$ according to $z$: if both
\[
a^0\wedge z>\bbot\quad\text{and}\quad a^0\wedge \neg z>\bbot,
\]
then let $a^{00}=a^0\wedge z$ and $a^{01}=a^0\wedge \neg z$; otherwise let $a^{00}$ and $a^{01}$ be any non-zero elements such that $a^{00}\vee a^{01}=a^0$ and $a^{00}\wedge a^{01}=\bbot$. Analogously, we define $a^{10},a^{11}\in\B$ by further splitting $a^1$ according to $z$.

Finally let
\[
A'=\Set*{a^{k\ell}}{a\in A,\ k<2,\ \ell<2}
\]
and let $f'\colon A'\to A'$ be defined as follows: for each $a\in A$
\[
f'\bigl(a^{00}\bigr)={f(a)}^{00},\quad f'\bigl(a^{01}\bigr)={f(a)}^{10},\quad f'\bigl(a^{10}\bigr)={f(a)}^{01},\quad f'\bigl(a^{11}\bigr)={f(a)}^{11}.
\]
Clearly $f'$ is a bijection equal to its inverse. By our construction of $A'$, note that there exist $Y\subseteq A'$ and $Z\subseteq A'$ such that $y=\bigvee Y$ and $z=\bigvee Z$.

We claim that
\[
F\cup\Bigl\{y,\bigvee f'[Z]\Bigr\}
\]
has the finite intersection property. Suppose not, then $\neg\bigl(y\wedge\bigvee f'[Z]\bigr)\in F$, therefore
\[
\bigvee\Set*{a\in A}{a\wedge y\wedge\bigvee f'[Z]=\bbot\text{ and }a\wedge y>\bbot}\in F
\]
by the fact that $F$ is a filter based on $A$ and our assumption on $y$. Now let $a\in A$ be such that $a\wedge y\wedge\bigvee f'[Z]=\bbot$ and $a\wedge y>\bbot$. Since $a$ is split according to $y$ we must have $a^0\le a\wedge y$, hence $a^0\wedge\bigvee f'[Z]=\bbot$, which implies $f(a)\wedge z=\bbot$ by the definition of $f'$. In conclusion, we have shown that
\[
\neg z\ge\bigvee\Set*{f(a)}{a\wedge y\wedge\bigvee f'[Z]=\bbot\text{ and }a\wedge y>\bbot}\in G,
\]
contradicting the hypothesis that $G\cup\{z\}$ has the finite intersection property.

The same argument demonstrates that
\[
G\cup\Bigl\{z,\bigvee f'[Y]\Bigr\}
\]
has the finite intersection property. Let $F'$ be the filter generated by $F\cup\bigl\{y,\bigvee f'[Z]\bigr\}$ and let $G'$ be the filter generated by $G\cup\bigl\{z,\bigvee f'[Y]\bigr\}$. It only remains to show that if $X\subseteq A'$ then
\begin{equation}\label{eq:uno}
\bigvee X\in F'\iff\bigvee f'[X]\in G'.
\end{equation}
Suppose $\bigvee X\in F'$, then there exists $u\in F$ such that $\bigvee X\ge u\wedge y\wedge\bigvee f'[Z]$. Hence, using the fact that $F$ is based on $A$,
\[
\bigvee f'[X]\ge\bigvee\Set{f(a)}{a\le u}\wedge\bigvee f'[Y]\wedge z\in G
\]
as desired. On the other hand, if $\bigvee f'[X]\in G'$ then we obtain $\bigvee X\in F'$ analogously. This establishes \eqref{eq:uno} and completes the proof.
\end{proof}

\begin{remark} In the statement of Lemma \ref{lemma:succ}, the assumption that $\bigvee\Set{a\in A}{a\wedge y>\bbot}\in F$ cannot be weakened to the assumption that $F\cup\{y\}$ has the finite intersection property. To see why, let $A=\Set{a_i}{i<\omega}$ be an arbitrary maximal antichain in $\B$, let $f\colon A\to A$ be the identity function, and $F=G=\Set{b\in\B}{\Set{i<\omega}{a_i\wedge\neg b>\bbot}\text{ is finite}}$. Taking $y=\bigvee\Set{a_{2i}}{i<\omega}$ and $z=\neg y$, it is easy to see that both $F\cup\{y\}$ and $G\cup\{z\}$ have the finite intersection property, but there do not exist $F'$, $G'$, and $f'\colon A'\to A'$ which satisfy the conditions above.
\end{remark}

The second lemma will take care of the limit stages of the recursive construction.

\begin{lemma}\label{lemma:limit} Let $\B$ be a complete Boolean algebra and $F$ a filter on $\B$. Suppose we are given, for each $n<\omega$, a countable maximal antichain $A_n$ in $\B$ and a bijection $f_n\colon A_n\to A_n$, satisfying the following conditions:
\begin{itemize}
\item $F=\Set{b\in\B}{\Set{a\in A_0}{a\wedge\neg b>\bbot}\text{ is finite}}$ and $f_0\colon A_0\to A_0$ is the identity function;
\item for every $n<\omega$, $f_n=f_n^{-1}$;
\item if $m<n<\omega$, then $\bigvee\Set*{b\in A_n}{\exists a\in A_m\bigl (b\le a\text{ and }f_n(b)\le f_m(a)\bigr)}\in F$.
\end{itemize}
Then there exist a countable maximal antichain $A_\infty$ in $\B$ and a bijection $f_\infty\colon A_\infty\to A_\infty$ such that:
\begin{itemize}
\item $f_\infty=f_\infty^{-1}$;
\item if $n<\omega$, then $\bigvee\Set*{b\in A_\infty}{\exists a\in A_n\bigl(b\le a\text{ and }f_\infty(b)\le f_n(a)\bigr)}\in F$.
\end{itemize}
\end{lemma}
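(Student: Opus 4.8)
The plan is to realize $A_\infty$ as a \emph{diagonal} common refinement of the $A_n$, defined separately below each atom of $A_0$, rather than as the naive full common refinement $\bigwedge_n A_n$; the latter fails precisely because condition (3) only guarantees coherence modulo $F$. First I would record the shape of $F$. Since $A_0$ is a maximal antichain and $f_0$ is the identity, writing $A_0=\Set{a_i}{i<\omega}$ and $t_k=\bigvee_{i\ge k}a_i$, one checks that $b\in F$ iff $t_k\le b$ for some $k<\omega$; thus $F$ is generated by the tails $\langle t_k:k<\omega\rangle$. For $m<n$ abbreviate the coherent set from condition (3) by $C_{mn}=\Set{b\in A_n}{\exists a\in A_m(b\le a\text{ and }f_n(b)\le f_m(a))}$. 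As $\bigvee C_{mn}\in F$ for all $m<n$ and $F$ is generated by tails, I can fix a strictly increasing sequence $\langle k_n:n<\omega\rangle$ with $k_0=0$ such that $t_{k_n}\le\bigwedge_{m<n}\bigvee C_{mn}$, and set $n(i)=\max\Set{n}{k_n\le i}$, so that $n(i)\to\infty$ as $i\to\infty$.

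Next I would define $A_\infty=\bigcup_{i<\omega}\Set{b\in A_{n(i)}}{b\le a_i}$. The crucial preliminary observation, obtained from the $m=0$ instance of condition (3), is that for $i\ge k_{n(i)}$ every $b\in A_{n(i)}$ with $b\le a_i$ already lies in $C_{0,n(i)}$; since $f_0$ is the identity and $A_0$ is an antichain, this forces $f_{n(i)}(b)\le a_i$. Consequently each block $\Set{b\in A_{n(i)}}{b\le a_i}$ covers $a_i$, so $A_\infty$ is a countable maximal antichain, and $f_{n(i)}$ restricts to an involution of that block. I can therefore set $f_\infty(b)=f_{n(i)}(b)$ whenever $b\le a_i$; by the previous sentence this is a well-defined involution of $A_\infty$ equal to its own inverse.

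Finally I would verify the coherence clause. Fix $n$; for every $i\ge k_n$ we have $n(i)\ge n$, and I claim every $b\in A_\infty$ with $b\le a_i$ witnesses the required property at level $n$. Indeed such $b$ lies in $A_{n(i)}$ with $b\le a_i\le\bigvee C_{n,n(i)}$, and since $C_{n,n(i)}\subseteq A_{n(i)}$ while $b$ is an element of the antichain $A_{n(i)}$, this forces $b\in C_{n,n(i)}$; unpacking the definition yields $a\in A_n$ with $b\le a$ and $f_\infty(b)=f_{n(i)}(b)\le f_n(a)$ (the case $n=n(i)$ being trivial, taking $a=b$). Hence the coherent join for level $n$ dominates $\bigvee_{i\ge k_n}a_i=t_{k_n}\in F$, as desired.

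The main obstacle is the mismatch between the \emph{exact} refinement needed to define a genuine antichain bijection $f_\infty$ and the merely \emph{mod-$F$} coherence supplied by the hypotheses: one cannot take the naive common refinement because below the finitely many ``bad'' atoms of $A_0$ the maps $f_n$ need not cohere at all. The diagonal definition sidesteps this, but one must check that the bookkeeping keeps $f_\infty$ a bijection of $A_\infty$ onto itself, which is exactly the content of the observation that, below a good atom $a_i$, the involution $f_{n(i)}$ preserves the relation ``$\le a_i$''.
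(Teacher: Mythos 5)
Your proof is correct and follows essentially the same route as the paper's: both use the tail-generated description of $F$ to pick a strictly increasing sequence $\langle k_n\rangle$, build $A_\infty$ diagonally from the blocks of $A_{n}$ sitting below the atoms $a_i$ with $k_n\le i<k_{n+1}$, and use the $m=0$ coherence condition (with $f_0$ the identity) to see that each $f_n$ preserves its block, so that the restrictions glue to an involution. Your version is indexed by $i$ rather than by $n$ and spells out the antichain argument showing $b\le\bigvee C_{0,n(i)}$ forces $b\in C_{0,n(i)}$, but the construction and verification are the same.
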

\begin{proof} Let us enumerate $A_0=\Set{a_i}{i<\omega}$. By hypothesis we may find a strictly increasing sequence $\Seq{k_n}{n<\omega}$ such that $k_0=0$ and for every $m\le n$
\[
\bigvee_{i\ge k_n}a_i\le\bigvee\Set*{b\in A_n}{\exists a\in A_m\bigl(b\le a\text{ and }f_n(b)\le f_m(a)\bigr)}.
\]
Now define $A_\infty$ diagonally: for every $n<\omega$ let
\[
D_n=\Set[\Bigg]{b\in A_n}{b\le\bigvee_{k_n\le i<k_{n+1}}a_i}
\]
and then let
\[
A_\infty=\bigcup_{n<\omega}D_n.
\]
Easily $A_\infty$ is a countable antichain; to see it is maximal, let $x>\bbot$ be arbitrary. Then there exists $i<\omega$ such that $a_i\wedge x>\bbot$. Let $n<\omega$ be such that $k_n\le i<k_{n+1}$; then clearly $x$ must meet some $b\in D_n$.

We proceed with the definition of $f_\infty$, starting with the observation that for every $n<\omega$ we have $f_n[D_n]=D_n$. Indeed, to see that $f_n[D_n]\subseteq D_n$, let $b\in D_n$. By definition, there exists $k_n\le i<k_{n+1}$ such that $b\le a_i$. Then we have $f_n(b)\le f_0(a_i)=a_i$, because $f_0$ is the identity function. Furthermore, to check $f_n[D_n]=D_n$, simply use the fact that $f_n=f_n^{-1}$.

Motivated by the above observation, we define
\[
f_\infty=\bigcup_{n<\omega}(f_n\rest D_n)
\]
which is easily seen to be a bijection equal to its inverse. By construction
\[
\bigvee_{i\ge k_n}a_i\le\bigvee\Set*{b\in A_\infty}{\exists a\in A_n\bigl(b\le a\text{ and }f_\infty(b)\le f_n(a)\bigr)},
\]
as desired.
\end{proof}

\begin{theorem}\label{theorem:m} Assume $2^{\aleph_0}=\aleph_1$. Let $\B$ be a complete c.c.c.\ Boolean algebra such that $\abs{\End(\B)}\le\aleph_1$. If $\B$ is not atomic, then there exist two ultrafilters $U$ and $V$ on $\B$ such that $U\equiv_\mathrm{JPR} V$ and $U\nleq_\mathrm{M} V$ and $V\nleq_\mathrm{M} U$.
\end{theorem}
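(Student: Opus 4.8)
The plan is to run a transfinite recursion of length $\omega_1$ building increasing filters $\Seq{U_\alpha}{\alpha<\omega_1}$ and $\Seq{V_\alpha}{\alpha<\omega_1}$ together with a coherent tower of countable maximal antichains $A_\alpha$ and self-inverse bijections $f_\alpha\colon A_\alpha\to A_\alpha$; the tower will witness JPR-equivalence while the filters are steered to kill every M-reduction. First I reduce to the atomless case: since $\B$ is not atomic there is $b_*>\bbot$ with $\B\rest b_*$ atomless, and this restriction is again complete and c.c.c.\ with $\abs{\End(\B\rest b_*)}\le\aleph_1$ (the map $k\mapsto k\circ(\cdot\wedge b_*)$ embeds $\End(\B\rest b_*)$ into $\End(\B)$). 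If $U,V$ are the canonical extensions to $\B$ of ultrafilters $U_*,V_*$ on $\B\rest b_*$, then any complete homomorphism $h\colon\B\to\B\rest v$ with $U=h^{-1}[V]$ restricts, via $c\mapsto h(c)\wedge b_*$, to a complete homomorphism $\B\rest b_*\to(\B\rest b_*)\rest(h(b_*)\wedge b_*)$ witnessing $U_*\le_\mathrm{M}V_*$; combined with Lemma \ref{lemma:dense}\eqref{lemma:jprdue} this transfers both JPR-equivalence and M-incomparability upward, so I may assume $\B$ is atomless. Using $2^{\aleph_0}=\aleph_1$ and $\abs{\B}\le\abs{\End(\B)}\le\aleph_1$, I fix enumerations $\B=\Set{b_\alpha}{\alpha<\omega_1}$ and $\End(\B)=\Set{h_\alpha}{\alpha<\omega_1}$.

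Throughout the recursion I maintain that $U_\alpha,V_\alpha$ are based on $A_\alpha$ with countable cofinality, that $\bigvee X\in U_\alpha\iff\bigvee f_\alpha[X]\in V_\alpha$ for every $X\subseteq A_\alpha$, and that for $\gamma<\beta$ the refinement-coherence $\bigvee\Set*{b\in A_\beta}{\exists a\in A_\gamma\bigl(b\le a\text{ and }f_\beta(b)\le f_\gamma(a)\bigr)}\in F$ holds, where $F=U_0=V_0$ is the Fréchet filter on the initial antichain $A_0$ and $f_0$ is the identity. These are exactly the hypotheses of Lemma \ref{lemma:limit}, which I apply at each (necessarily countable-cofinality) limit $\delta$ to obtain $A_\delta$ and $f_\delta$ cohering with all earlier levels, setting $U_\delta=\bigcup_{\alpha<\delta}U_\alpha$ and $V_\delta=\bigcup_{\alpha<\delta}V_\alpha$. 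The maintained link and coherence are precisely conditions \eqref{eq:jpruno} and \eqref{eq:jprdue} of Definition \ref{definition:jpr} for the tower $\Set{A_\alpha}{\alpha<\omega_1}$ with $g(A_\alpha)=A_\alpha$ and $f_{A_\alpha}=f_\alpha$; as the tower is arranged to be cofinal in $\Part(\B)$ (every element of $\B$ being split along the recursion), Lemma \ref{lemma:dense}\eqref{lemma:jpruno} yields $U\le_\mathrm{JPR}V$, and since each $f_\alpha$ is its own inverse the same data gives $V\le_\mathrm{JPR}U$, hence $U\equiv_\mathrm{JPR}V$.

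At a successor stage I use finitely many instances of Lemma \ref{lemma:succ}. To force the final filters to be ultrafilters I decide $b_\alpha$ inside $V$ by presenting $b_\alpha$ or $\neg b_\alpha$ as the finite-intersection-side input $z$ (with $\btop$ on the other side), and decide $b_\alpha$ inside $U$ by the same lemma with the roles of $U$ and $V$ interchanged, which is legitimate because $f_\alpha=f_\alpha^{-1}$ renders the link symmetric; thus $U=\bigcup_{\alpha}U_\alpha$ and $V=\bigcup_{\alpha}V_\alpha$ are ultrafilters. To prevent $h_\alpha$ from witnessing $U\le_\mathrm{M}V$ I exploit that $h_\alpha$ is a complete homomorphism, so $h_\alpha(\bigvee X)=\bigvee\Set{h_\alpha(a)}{a\in X}$; by the link it suffices to produce some $X\subseteq A_\alpha$ with $\bigvee f_\alpha[X]\in V$ but $h_\alpha(\bigvee X)\notin V$, for then $\bigvee X\in U$ while $h_\alpha(\bigvee X)\notin V$ breaks $U=h_\alpha^{-1}[V]$. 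I install this by feeding an element below $\bigvee f_\alpha[X]\wedge\neg h_\alpha(\bigvee X)$ through the finite-intersection-side of Lemma \ref{lemma:succ}, which only requires that this element be positive against $V_\alpha$. The symmetric construction, interchanging $U$ and $V$, destroys $V\le_\mathrm{M}U$; running over all $\alpha$ disposes of every member of $\End(\B)$.

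The heart of the matter, and the step I expect to be hardest, is guaranteeing the witness $X$. Writing $b=h_\alpha(\btop)$, if $V_\alpha\cup\{\neg b\}$ has the finite intersection property I just force $\neg b\in V$ (taking $X=A_\alpha$, so that $\bigvee f_\alpha[X]=\btop$ and $h_\alpha(\bigvee X)=b$), which already blocks $U\le_\mathrm{M}V$ because $h_\alpha(\btop)\notin V$. The delicate situation is when $b$ is irrevocably in $V_\alpha$---unavoidable, for instance, when $h_\alpha$ is a complete automorphism. Here I must work below $b$ and build $X$ by a recursion against a countable base of $V_\alpha$, keeping $\bigvee f_\alpha[X]\wedge\neg h_\alpha(\bigvee X)$ positive at each step; the room to do so comes from atomlessness of $\B\rest b$, which prevents the disjoint image antichain $\Set{h_\alpha(a)}{a\in A_\alpha}$ from being identified by $V$ with the permuted antichain $\Set{f_\alpha(a)}{a\in A_\alpha}$. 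Verifying that atomlessness always supplies this positivity is the crux on which the whole construction turns.
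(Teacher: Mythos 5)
Your scaffolding (the $\omega_1$-recursion, the tower of countable maximal antichains with self-inverse bijections maintained via Lemmas \ref{lemma:succ} and \ref{lemma:limit}, the reduction to the atomless case, and the way the tower yields JPR-equivalence) matches the paper's proof. But the step you yourself flag as the crux --- producing, for a given $h_\alpha\in\End(\B)$, a set $X\subseteq A_\alpha$ with $\bigvee f_\alpha[X]\wedge\neg h_\alpha\bigl(\bigvee X\bigr)$ positive over $V_\alpha$ --- is a genuine gap, and the witness you want does not exist in general. By confining the diagonalization to elements of the form $\bigvee X$ with $X\subseteq A_\alpha$, you are working inside the complete subalgebra generated by $A_\alpha$, where the link forces $\bigvee X\in U\iff\bigvee f_\alpha[X]\in V$. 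If $h_\alpha$ happens to agree on that subalgebra with the map $\bigvee X\mapsto\bigvee f_\alpha[X]$, then $\bigvee f_\alpha[X]\wedge\neg h_\alpha\bigl(\bigvee X\bigr)=\bbot$ for \emph{every} $X\subseteq A_\alpha$ and your strategy has nothing to feed into Lemma \ref{lemma:succ}. This is not a pathological case: it occurs already for $h_\alpha=\mathrm{id}$ at a stage where $f_\alpha$ is the identity, and on a weakly homogeneous algebra such as $\C_\omega$ one can glue isomorphisms $\C_\omega\rest a\cong\C_\omega\rest f_\alpha(a)$ into an automorphism $h$ with $h(a)=f_\alpha(a)$ for all $a\in A_\alpha$, so that $h\bigl(\bigvee X\bigr)=\bigvee f_\alpha[X]$ identically. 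Your heuristic that atomlessness ``prevents the image antichain from being identified with the permuted antichain'' is false precisely for such $h$: the image antichain \emph{is} the permuted antichain. Deferring such $h$ to a later stage does not help, since the same obstruction recurs for $A_\beta$ and $f_\beta$.

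The paper's proof avoids this by diagonalizing with an element \emph{outside} the subalgebra generated by $A_\alpha$: it picks $b$ with $a\wedge b>\bbot$ and $a\wedge\neg b>\bbot$ for every $a\in A_\alpha$, observes that at least one of $V_\alpha\cup\{h_\alpha(b)\}$, $V_\alpha\cup\{h_\alpha(\neg b)\}$ has the finite intersection property, and then uses Lemma \ref{lemma:succ} with $y=u$ (the hypothesis $\bigvee\Set{a\in A_\alpha}{a\wedge y>\bbot}\in U_\alpha$ holds trivially because $b$ splits every $a$) and $z=h_\alpha(\neg u)$ (which only needs to be compatible with $V_\alpha$). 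The point is that $b$ and $h_\alpha(\neg b)$ constitute a genuinely new pair of decisions not yet constrained by the link, and the asymmetric hypotheses of Lemma \ref{lemma:succ} --- whose necessity is the content of the remark following it --- are exactly tailored to absorb such a pair while re-knitting $f_\alpha$ into $f_{\alpha+1}$ on the refined antichain. Separately, your claim that the tower is dense in $\Part(\B)$ because ``every element of $\B$ is split along the recursion'' is too quick: Lemma \ref{lemma:limit} only yields refinement modulo $U_0$ at limit stages, so density must be arranged explicitly by enumerating $\Part(\B)$ (of size $\le\aleph_1$ under CH and c.c.c.) and refining $P_\alpha$ at stage $\alpha+1$, as in condition \eqref{diffsei} of the paper's construction.
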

\begin{proof} Without loss of generality, $\B$ is atomless. Indeed, if we let
\[
b=\neg\bigvee\Set{a\in\B}{a\text{ is an atom}},
\]
then $\B\rest b$ is an atomless c.c.c.\ Boolean algebra such that $\abs{\End(\B\rest b)}=\aleph_1$. Consequently, if we build $U$ and $V$ on $\B\rest b$, then by Lemma \ref{lemma:dense}\eqref{lemma:jprdue} they will generate ultrafilters on $\B$ with the desired properties.

Hence, assuming $\B$ is atomless, we enumerate
\[
\B=\Set{b_\alpha}{\alpha<\omega_1}
\]
and
\[
\End(\B)=\Set{h_\alpha}{\alpha<\omega_1}
\]
and
\[
\Part(\B)=\Set{P_\alpha}{\alpha<\omega_1}.
\]
We shall carry out a recursive construction of filters $\Seq{U_\alpha}{\alpha<\omega_1}$ and $\Seq{V_\alpha}{\alpha<\omega_1}$ on $\B$, together with maximal antichains $\Seq{A_\alpha}{\alpha<\omega_1}$ in $\B$ and functions $\Seq{f_\alpha}{\alpha<\omega_1}$, according to the following conditions:
\begin{enumerate}
\item\label{diffuno} $U_0=V_0=\Set{b\in\B}{\Set{a\in A_0}{a\wedge\neg b>\bbot}\text{ is finite}}$ and $f_0\colon A_0\to A_0$ is the identity function;
\item\label{diffdue} for every $\alpha<\omega_1$, both $U_\alpha$ and $V_\alpha$ are based on $A_\alpha$;
\item\label{difftre} for every $\alpha<\omega_1$, $f_\alpha\colon A_\alpha\to A_\alpha$ is a bijection, $f_\alpha=f_\alpha^{-1}$, and for every $X\subseteq A_\alpha$ we have $\bigvee X\in U_\alpha\iff\bigvee f_\alpha[X]\in V_\alpha$;
\item\label{diffquattro} if $\alpha<\beta<\omega_1$ then $U_\alpha\subseteq U_\beta$, $V_\alpha\subseteq V_\beta$, and $\bigvee\Set[\big]{b\in A_\beta}{\exists a\in A_\alpha\bigl(b\le a\text{ and }f_\beta(b)\le f_\alpha(a)\bigr)}\in U_0$;
\item\label{diffcinque} if $\delta<\omega_1$ is a limit ordinal, then $U_\delta=\bigcup_{\alpha<\delta}U_\alpha$ and $V_\delta=\bigcup_{\alpha<\delta}V_\alpha$;
\item\label{diffsei} for every $\alpha<\omega_1$, $A_{\alpha+1}$ is a refinement of $P_\alpha$;
\item\label{diffsette} for every $\alpha<\omega_1$, either $b_\alpha\in U_{\alpha+1}$ or $\neg b_\alpha\in U_{\alpha+1}$, and similarly either $b_\alpha\in V_{\alpha+1}$ or $\neg b_\alpha\in V_{\alpha+1}$;
\item\label{diffotto} for every $\alpha<\omega_1$, there exists $u\in U_{\alpha+1}$ such that $h_\alpha(\neg u)\in V_{\alpha+1}$, and similarly there exists $v\in V_{\alpha+1}$ such that $h_\alpha(\neg v)\in U_{\alpha+1}$.
\end{enumerate}

As for the base step, let $A_0$ be any infinite maximal antichain in $\B$. Then $U_0$, $V_0$, and $f_0$, as defined by condition \eqref{diffuno}, clearly satisfy \eqref{diffdue} and \eqref{difftre}.

We take care of the successor step: for $\alpha<\omega_1$, let $U_\alpha$, $V_\alpha$, and $f_\alpha\colon A_\alpha\to A_\alpha$ be given. Let $b\in\B$ be such that for every $a\in A_\alpha$ both $a\wedge b>\bbot$ and $a\wedge\lnot b>\bbot$. Such an element can be easily found by atomlessness, for example by splitting each $a\in A$ in two disjoint positive parts and taking the supremum of the first halves. Since $h_\alpha$ has the property that $h_\alpha(\neg b)=\neg h_\alpha(b)$, it must be the case that $V_\alpha\cup\{h_\alpha(\neg b)\}$ has the finite intersection property or $V_\alpha\cup\{h_\alpha(b)\}$ has the finite intersection property. In the first case let $u=b$, otherwise let $u=\neg b$. By Lemma \ref{lemma:succ}, there exist filters $U'_\alpha$ and $V'_\alpha$, a maximal antichain $A'_\alpha$, and a bijection $f'_\alpha\colon A'_\alpha\to A'_\alpha$ such that $U_\alpha\cup\{u\}\subseteq U'_\alpha$, $V_\alpha\cup\{h_\alpha(\neg u)\}\subseteq V'_\alpha$, and conditions \eqref{diffdue}, \eqref{difftre}, \eqref{diffquattro} are preserved. Symmetrically, by applying Lemma \ref{lemma:succ} again if necessary, we may also assume there exists $v\in V'_\alpha$ such that $h_\alpha(\neg v)\in U'_\alpha$.

Another double application of Lemma \ref{lemma:succ}, taking $y=\btop$ and $z=\pm b_\alpha$, gives filters $U_{\alpha+1}\supseteq U'_\alpha$ and $V_{\alpha+1}\supseteq V'_\alpha$, a maximal antichain $A''_\alpha$, and a bijection $f''_\alpha\colon A''_\alpha\to A''_\alpha$, such that condition \eqref{diffsette} is additionally satisfied.

To complete the successor step, we take care of condition \eqref{diffsei}. Let $A_{\alpha+1}$ be a common refinement of $A''_\alpha$ and $P_\alpha$. Without loss of generality, for each $a\in A''_\alpha$ the set $\Set{b\in A_{\alpha+1}}{b\le a}$ is infinite. This allows to define $f_{\alpha+1}$ as $f''_\alpha$ extended in the obvious way to $A_{\alpha+1}$.

For a limit ordinal $\delta<\omega_1$, let $U_\delta$ and $V_\delta$ be defined according to \eqref{diffcinque}. Since $\delta$ is countable, there exists a strictly increasing sequence $\Seq{\alpha_n}{n<\omega}$ of ordinals such that $\alpha_0=0$ and $\delta=\bigcup_{n<\omega}\alpha_n$. From Lemma \ref{lemma:limit} applied to $\Seq{A_{\alpha_n}}{n<\omega}$, we obtain a maximal antichain $A_\delta$ and a bijection $f_\delta\colon A_\delta\to A_\delta$ such that $f_\delta=f_\delta^{-1}$ and, if $n<\omega$, then $\bigvee\Set*{b\in A_\delta}{\exists a\in A_{\alpha_n}\bigl(b\le a\text{ and }f_\delta(b)\le f_{\alpha_n}(a)\bigr)}\in U_0$. This easily implies that conditions \eqref{diffdue} and \eqref{diffquattro} are preserved.

It only remains to check that, given $X\subseteq A_\delta$, we have $\bigvee X\in U_\delta\iff\bigvee f_\delta[X]\in V_\delta$. Assuming $\bigvee X\in U_\delta$, there exists $n<\omega$ such that $\bigvee X\in U_{\alpha_n}$. Consequently, if we let
\[
X'=\Set*{x\in X}{\exists a\in A_{\alpha_n}\bigl(x\le a\text{ and }f_\delta(x)\le f_{\alpha_n}(a)\bigr)}
\]
then also $\bigvee X'\in U_{\alpha_n}$. This implies, since $U_{\alpha_n}$ is based on $A_{\alpha_n}$, the existence of $Y\subseteq A_{\alpha_n}$ such that $\bigvee X'\ge\bigvee Y\in U_{\alpha_n}$. Then it is easy to check that
\[
\bigvee f_\delta[X]\ge\bigvee f_\delta[X']\ge\bigvee f_{\alpha_n}[Y]\in V_{\alpha_n},
\]
as desired. Conversely, if $\bigvee f_\delta[X]\in V_\delta$ then, by the same argument, we obtain $\bigvee X\in U_\delta$. This completes the recursive construction.

In the end, let $U=\bigcup_{\alpha<\omega_1} U_\alpha$ and $V=\bigcup_{\alpha<\omega_1} V_\alpha$, which are ultrafilters by condition \eqref{diffsette}. Moreover, condition \eqref{diffotto} implies that $U\nleq_\mathrm{M} V$ and $V\nleq_\mathrm{M} U$. To verify that $U\le_\mathrm{JPR} V$, by \ref{lemma:dense}\eqref{lemma:jpruno} it is sufficient to consider a dense subset of $\Part(\B)$, but density is indeed guaranteed by condition \eqref{diffsei}. So let $X\subseteq A_\alpha$ be such that $\bigvee X\in U$; we show that $\bigvee f_\alpha[X]\in V$. Take $\beta>\alpha$ with $\bigvee X\in U_\beta$ and define $Y=\Set*{y\in A_\beta}{\exists x\in X\bigl(y\le x\text{ and }f_\beta(y)\le f_\alpha(x)\bigr)}$. Evidently $\bigvee Y\in U_\beta$ and therefore $\bigvee f_\alpha[X]\ge\bigvee f_\beta[Y]\in V_\beta$ as desired. Furthermore, property \eqref{eq:jprdue} of Definition \ref{definition:jpr} follows from our condition \eqref{diffquattro}. By symmetry of the construction, the same argument gives $V\le_\mathrm{JPR} U$ and completes the proof.
\end{proof}

The combination of Lemma \ref{lemma:end} and Theorem \ref{theorem:m} gives that, assuming $2^{\aleph_0}=\aleph_1$, there exist ultrafilters which are JPR-equivalent but M-incomparable on many Boolean algebras of interest in set theory, including the Cohen and random forcing.

\begin{question} Does the conclusion of Theorem \ref{theorem:m} still hold without the assumption that $2^{\aleph_0}=\aleph_1$?
\end{question}

\section{Tukey-incomparable ultrafilters}\label{section:cinque}

To complete our analysis of the orderings, we aim to construct two ultrafilters which are JPR-equivalent but Tukey-incomparable. This will be achieved using a notion reminiscent of the usual $P$-point property.

\begin{definition}[{Star\'y \cite[Definition 3.1]{stary:coherent}}] Let $\B$ be a complete c.c.c.\ Boolean algebra. An ultrafilter $U$ on $\B$ is a \emph{coherent $P$-ultrafilter} if and only if: for every $P\in\Part(\B)$ and every $\Set{X_n}{n<\omega}\subseteq\mathcal{P}(P)$ such that $\Set{\bigvee X_n}{n<\omega}\subseteq U$, there exists $Y\subseteq P$ such that $\bigvee Y\in U$ and for all $n<\omega$ the set $Y\setminus X_n$ is finite.
\end{definition}

For further details on the relation between coherent $P$-ultrafilters and Tukey reducibility, we refer the reader to our previous work \cite[Section 4]{bp:comb}. The next proposition, essentially due to Ketonen \cite{ketonen:p}, is the main ingredient to construct such ultrafilters. For a detailed proof in the Boolean-algebraic context, see the proof of \cite[Proposition 3.4]{stary:coherent}. As usual, here $\mathfrak{d}$ denotes the minimum cardinality of a dominating family of functions from $\omega$ to $\omega$. 

\begin{proposition}\label{proposition:stary} Let $\B$ be a complete c.c.c.\ Boolean algebra, let $F$ be a filter on $\B$ such that $\cof(F)<\mathfrak{d}$. For every $P\in\Part(\B)$ and every $\Set{X_n}{n<\omega}\subseteq\mathcal{P}(P)$ such that $\Set{\bigvee X_n}{n<\omega}\subseteq F$, there exists $Y\subseteq P$ such that $F\cup\{\bigvee Y\}$ has the finite intersection property and for all $n<\omega$ the set $Y\setminus X_n$ is finite.
\end{proposition}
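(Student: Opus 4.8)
The plan is to adapt Ketonen's diagonalisation \cite{ketonen:p} to the Boolean setting, exploiting the hypothesis $\cof(F)<\mathfrak{d}$ to meet the filter. I would begin with three reductions. Since $\B$ is c.c.c., the antichain $P$ is countable; enumerate $P=\Set{p_i}{i<\omega}$ and identify subsets of $P$ with subsets of $\omega$ through this enumeration (if $P$ is finite the sequence $\Seq{X_n}{n<\omega}$ is eventually constant and $Y=\bigcap_{n<\omega}X_n$ works, so assume $P$ infinite). Next, we may assume $\Seq{X_n}{n<\omega}$ is decreasing: put $X'_n=\bigcap_{m\le n}X_m$ and note that, for subsets $S,T$ of an antichain, $\bigvee(S\cap T)=\bigvee S\wedge\bigvee T$ by infinite distributivity, so that $\bigvee X'_n=\bigwedge_{m\le n}\bigvee X_m\in F$; since $X'_n\subseteq X_n$, any pseudo-intersection of the primed sequence is one of the original. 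Finally fix a cofinal $\Set{u_\xi}{\xi<\lambda}\subseteq F$ with $\lambda=\cof(F)$. As $F\cup\bigl\{\bigvee Y\bigr\}$ has the finite intersection property precisely when $u\wedge\bigvee Y>\bbot$ for all $u\in F$, and cofinality reduces this to the $u_\xi$, it is enough to build a pseudo-intersection $Y$ of $\Seq{X_n}{n<\omega}$ with $u_\xi\wedge\bigvee Y>\bbot$ for every $\xi<\lambda$.

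For the heart of the argument, I would attach to each $\xi<\lambda$ a function $g_\xi\colon\omega\to\omega$, letting $g_\xi(n)$ be the least $i$ such that $p_i\in X_n$ and $p_i\wedge u_\xi>\bbot$. This is well defined because $\bigvee X_n\wedge u_\xi\in F$ is positive, so by distributivity some $p_i\in X_n$ meets $u_\xi$. Here the hypothesis enters: since $\lambda<\mathfrak{d}$, the family $\Set{g_\xi}{\xi<\lambda}$ is not dominating, so I may fix $f\colon\omega\to\omega$, which I take to be strictly increasing, such that for every $\xi<\lambda$ the inequality $g_\xi(n)<f(n)$ holds for infinitely many $n$.

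Then I would set
\[
Y=\bigcup_{n<\omega}\Set{p_i\in X_n}{i<f(n)}
\]
and check the two requirements. For the pseudo-intersection property, suppose $p_i\in Y\setminus X_m$; since the sequence is decreasing, the index $n$ witnessing $p_i\in Y$ must satisfy $n<m$, whence $i<f(n)<f(m)$, so $Y\setminus X_m\subseteq\Set{p_i}{i<f(m)}$ is finite. For the meeting property, fix $\xi<\lambda$ and choose $n$ with $g_\xi(n)<f(n)$; then $i=g_\xi(n)$ gives $p_i\in X_n$ with $i<f(n)$, so $p_i\in Y$, and $p_i\wedge u_\xi>\bbot$ yields $\bigvee Y\wedge u_\xi\ge p_i\wedge u_\xi>\bbot$.

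I expect the main obstacle to be the diagonalisation itself: the set $Y$ has to be simultaneously almost contained in every $X_n$ and large enough to meet every element of a cofinal subset of $F$, and these demands pull in opposite directions. The bound $\cof(F)<\mathfrak{d}$ is exactly what reconciles them, by furnishing the escaping function $f$ whose growth is fast enough, infinitely often, to catch a witness inside the shrinking sets $X_n$ while remaining slow enough to keep $Y$ a pseudo-intersection. The only other point requiring care is the antichain distributivity identity underlying the reduction to a decreasing sequence.
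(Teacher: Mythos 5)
Your argument is correct and is essentially the same Ketonen-style diagonalisation that the paper relies on: the paper does not prove this proposition itself but defers to Star\'y's Proposition 3.4, whose proof likewise reduces to a decreasing sequence, attaches to each element of a cofinal subfamily of $F$ the ``first witness'' function $g_\xi$, and uses $\cof(F)<\mathfrak{d}$ to find an escaping $f$ defining $Y$. The only cosmetic slip is the remark that a sequence of subsets of a finite $P$ is ``eventually constant'' (it need only take finitely many values), but your fallback $Y=\bigcap_{n<\omega}X_n$ works regardless.
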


From now on, we restrict our attention to the \emph{Cohen algebra} $\C_\omega$, the unique atomless complete Boolean algebra with a countable dense subalgebra. Let $\Set{d_k}{k<\omega}$ be an enumeration of the dense subalgebra, with $d_0=\bbot$. Furthermore, let $\Set{a_i}{i<\omega}$ be an infinite maximal antichain in $\C_\omega$, which we also consider fixed throughout the section.

\begin{definition} Given a strictly increasing function $\varphi\colon\omega\to\omega$, let $C_\varphi\subseteq\C_\omega$ be defined as follows: $c\in C_\varphi$ if and only if there exists a strictly increasing sequence $\Seq{m_j}{j<\omega}$ such that for all $j<\omega$
\[
\Set{a_i\wedge c}{i<\varphi(m_j)}\subseteq\Set{d_k}{k<\varphi(m_j)}.
\]
\end{definition}

We observe that the set $C_\varphi$ is naturally closed under initial segments: if $c\in C_\varphi$ and $\ell<\omega$ then $\bigvee\Set{a_i\wedge c}{i<\ell}\in C_\varphi$.

\begin{lemma}\label{lemma:cof} Let $U$ be a coherent $P$-ultrafilter on $\C_\omega$ such that $U\cap\Set{a_i}{i<\omega}=\emptyset$. For every strictly increasing function $\varphi\colon\omega\to\omega$, the set $C_\varphi\cap U$ is cofinal in $U$.
\end{lemma}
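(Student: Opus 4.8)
The plan is to fix $u\in U$ and manufacture $c\in U$ with $c\le u$ and $c\in C_\varphi$. Writing $R=\varphi[\omega]$, unravelling the definition shows that $c\in C_\varphi$ exactly when every $a_i\wedge c$ is a dense element, say $a_i\wedge c=d_{J_i}$, and $\max\Set{J_i}{i<N}<N$ for infinitely many $N\in R$; here the columns with $a_i\wedge c=\bbot=d_0$ never cause trouble, so only the \emph{on}-columns matter. The hypothesis $U\cap\Set{a_i}{i<\omega}=\emptyset$ gives $\neg a_i\in U$ for every $i$, hence every tail $\bigvee_{i\ge n}a_i$ lies in $U$; this tail-domination will be the source of membership throughout.

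First I would fix, for each $i$, a countable partition of $a_i\wedge u$ into dense elements $\Set{q_{i,L}}{L<\omega}$, put $e_{i,L}=\bigvee_{l<L}q_{i,l}$ (a finite join of dense elements, hence dense, with $e_{i,0}=\bbot$ and $\bigvee_L e_{i,L}=a_i\wedge u$), and let $\gamma(i,L)$ be the index of $e_{i,L}$ in the enumeration $\Set{d_k}{k<\omega}$. The $q_{i,L}$ together with $\neg u$ form a maximal antichain $P$. Enumerating $R=\Set{N_s}{s<\omega}$, I would apply the coherent $P$-ultrafilter property to the family $X_s=\Set{q_{i,L}}{i\ge N_s\text{ or }\gamma(i,L+1)<N_s}$: each $\bigvee X_s\ge u\wedge\bigvee_{i\ge N_s}a_i\in U$, so we obtain $Y\subseteq P$ with $\bigvee Y\in U$ and $Y\setminus X_s$ finite for all $s$. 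Since $\gamma(i,\cdot)$ is injective, each column $Y_i=\Set{L}{q_{i,L}\in Y}$ must be finite, so \emph{rounding up} to $c=\bigvee_i e_{i,L_i}$ with $L_i=\max Y_i+1$ (and $L_i=0$ when $Y_i=\emptyset$) gives $\bigvee Y\le c\le u$, whence $c\in U$, with each $a_i\wedge c=e_{i,L_i}$ dense of index $J_i=\gamma(i,L_i)$.

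The purpose of this set-up is the following bookkeeping observation: a piece $q_{i,L}$ fails to lie in $X_s$ precisely when $N_s\in\bigl(i,\gamma(i,L+1)\bigr]$, so the deepest piece of column $i$ shows that column $i$ \emph{breaks} only the finitely many levels lying in $(i,J_i]$. Consequently the levels at which $c$ meets the index bound are exactly $R\setminus\bigcup_i(i,J_i]$, and $c\in C_\varphi$ if and only if this set is infinite.

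The hard part will be guaranteeing that $R\setminus\bigcup_i(i,J_i]$ is infinite, i.e.\ that the indices $J_i$ delivered by the diagonalisation grow slowly enough along $R$. A single application of the coherent $P$-property only controls, for each fixed $s$, the finitely many breakers of $N_s$, whereas we need infinitely many completely breaker-free levels; the underlying tension is that approximating $a_i\wedge u$ well enough to keep the join in $U$ pushes $J_i$ up, while $C_\varphi$ demands $J_i$ below the next level of $R$. I would resolve this by replacing the one-shot diagonalisation with a recursion that chooses the levels $N_s\in R$ sparsely: having committed the columns below $N_s$ with index $<N_s$, I would take $N_{s+1}$ large, approximate the block $[N_s,N_{s+1})$ within the index budget $N_{s+1}$ as deeply as membership requires, switch off the few columns not non-trivially approximable within budget, and invoke the coherent $P$-property on the tail antichain to certify that the residual—and hence the limit $c$—remains in $U$. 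Arranging that the switched-off columns are $U$-negligible while infinitely many committed levels stay breaker-free is the crux, and it is exactly here that the $P$-point-like strength of a \emph{coherent} $P$-ultrafilter, rather than mere $P$-pointness of the induced ultrafilter on $\Set{a_i}{i<\omega}$, is needed.
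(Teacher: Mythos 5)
Your first diagonalisation is sound and in fact reproves the ingredient the paper imports as \cite[Lemma 4.5]{bp:comb}: the antichain $P=\Set{q_{i,L}}{i,L<\omega}\cup\{\neg u\}$, the sets $X_s$, and the rounding-up to $c=\bigvee_i e_{i,L_i}$ do yield $c\in U$ with $c\le u$ and every $a_i\wedge c$ in the countable dense subalgebra. But the proof stops there. The actual content of the lemma is the index control, and you explicitly leave it as ``the crux'' with only a sketch; that sketch does not work as described. The coherent $P$-property gives $Y\setminus X_s$ \emph{finite}, not empty, so for each level $N_s$ it only bounds the number of columns $i$ with $N_s\in(i,J_i]$; it gives no reason why $R\setminus\bigcup_i(i,J_i]$ should be infinite, and indeed nothing in your construction prevents the intervals $(i,J_i]$ from covering a tail of $R$. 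Your proposed repair --- switching off the columns that cannot be approximated within budget --- founders on exactly the point you flag: the union of the switched-off columns accumulates over infinitely many stages, and you have no mechanism forcing its join out of $U$; ``invoking the coherent $P$-property on the tail antichain'' does not certify membership of a specific recursively built element.

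The missing idea is much simpler and uses only the ultrafilter dichotomy, not a second diagonalisation. Having obtained $v\le u$ in $U$ with each $a_i\wedge v=d_{k_i}$, choose $n_0=0$ and $n_{j+1}$ so large that $k_i<\varphi(n_{j+1})$ for all $i\le\varphi(n_j)$, and set $z=\bigvee\Set{a_i}{\exists j\ \varphi(n_{2j+1})\le i<\varphi(n_{2j+2})}$. Exactly one of $z,\neg z$ lies in $U$; intersecting $v$ with it kills every other block of columns. At the surviving checkpoints $N=\varphi(n_{2j+2})$ (respectively $\varphi(n_{2j+1})$), every column $i<N$ either has been zeroed out (index $0$) or lies in an earlier kept block, whence its index is $<N$ by the choice of the $n_j$. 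So one does not need the diagonalisation to deliver small indices at all; one accepts whatever indices appear and then thins by alternate blocks afterwards. Without this (or an equivalent) second step your argument does not establish $c\in C_\varphi$, so the proof as written has a genuine gap.
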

\begin{proof} Let $u\in U$; we have to find some $c\in C_\varphi\cap U$ such that $c\le u$. By our \cite[Lemma 4.5]{bp:comb}, there exists $v\in U$ such that $v\le u$ and
\[
\Set{a_i\wedge v}{i<\omega}\subseteq\Set{d_k}{k<\omega}.
\]
Inductively, choose a strictly increasing sequence $\Seq{n_j}{j<\omega}$ such that $n_0=0$ and for all $j<\omega$
\[
\Set{a_i\wedge v}{i\le\varphi(n_j)}\subseteq\Set{d_k}{k<\varphi(n_{j+1})}.
\]

Suppose
\[
z=\bigvee\Set[\big]{a_i}{\text{there exists }j<\omega\text{ such that }\varphi(n_{2j+1})\le i<\varphi(n_{2j+2})}\notin U,
\]
then we let $c=v\wedge\neg z$ and, for all $j<\omega$, let $m_j=n_{2j}$. Then $c\le u$ and $c\in C_\varphi$, as witnessed by the sequence $\Seq{m_j}{j<\omega}$. On the other hand, in case $z\in U$, we let $c=v\wedge z$ and proceed analogously.
\end{proof}

In the proof of the main Theorem \ref{theorem:main}, we aim to construct two coherent $P$-ultrafilters which are Tukey-incomparable. However, a priori there are $2^{2^{\aleph_0}}$ potential Tukey reductions to enumerate, which means a direct recursive construction has little hope to succeed. To get around this problem we first establish that, whenever $U$ is a coherent $P$-ultrafilter on $\C_\omega$, each potential Tukey reduction on $U$ has a canonical representation. This idea dates back to Dobrinen and Todor\v{c}evi\'c \cite[Section 3]{dt:tukey} and has been thoroughly examined by Dobrinen \cite{dobrinen:continuous}.

We introduce a convenient piece of notation: given $c\in\C_\omega$ and $\ell<\omega$, let
\[
c^\ell=\bigvee\Set{a_i\wedge c}{i<\ell}\vee\bigvee\Set{a_i}{i\ge\ell}.
\]

\begin{definition}\label{definition:cont} Let $C\subseteq\C_\omega$; a function $f\colon C\to\C_\omega$ is \emph{represented by a basic map} if there exists a map $\hat{f}\colon\Set*{c^\ell}{c\in C,\ \ell<\omega}\to\C_\omega$ such that for all $c\in C$
\[
f(c)=\bigwedge\Set*{\hat{f}\bigl(c^\ell\bigr)}{\ell<\omega}.
\]
\end{definition}

\begin{remark}\label{remark:cont} If $\varphi\colon\omega\to\omega$ is strictly increasing, then the set $\Set*{c^\ell}{c\in C_\varphi,\ \ell<\omega}$ is countable. It follows that, for every such $\varphi$, there are at most $2^{\aleph_0}$ many functions $f\colon C_\varphi\to\C_\omega$ represented by a basic map.
\end{remark}

\begin{theorem}\label{theorem:rep} Let $U$ be a coherent $P$-ultrafilter on $\C_\omega$ such that $U\cap\Set{a_i}{i<\omega}=\emptyset$. Then, for every monotonic function $g\colon U\to\C_\omega$ there exist $x\in U$, a strictly increasing function $\varphi\colon\omega\to\omega$, and a monotonic function $g^*\colon C_\varphi\to\C_\omega$ represented by a basic map such that $g^*\rest(U\rest x)=g\rest(C_\varphi\cap U\rest x)$.
\end{theorem}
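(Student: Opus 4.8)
The plan is to produce $g^*$ canonically and then locate $x$ and $\varphi$ on which $g$ already coincides with it. First I would observe that for every $c\in\C_\omega$ and every $\ell<\omega$ we have $c^\ell\ge\bigvee\Set{a_i}{i\ge\ell}$, and the latter lies in $U$ because $U\cap\Set{a_i}{i<\omega}=\emptyset$ forces $\bigvee\Set{a_i}{i<\ell}\notin U$; hence $c^\ell\in U$ and $g(c^\ell)$ is defined. This lets me set $\hat g(c^\ell)=g(c^\ell)$ and define $g^*(c)=\bigwedge\Set{g(c^\ell)}{\ell<\omega}$ on any $C_\varphi$; the map $g^*$ is then represented by a basic map by construction, and it is monotonic because $c\le c'$ implies $c^\ell\le{c'}^\ell$ for every $\ell$. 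Moreover $c\le c^\ell$ together with monotonicity of $g$ gives $g(c)\le g^*(c)$ for every $c\in U$. Thus the entire content of the theorem reduces to finding $x\in U$ and a strictly increasing $\varphi$ for which the decreasing sequence $\Seq{g(c^\ell)}{\ell<\omega}$ converges down to $g(c)$, that is $g^*(c)\le g(c)$, for all $c\in C_\varphi\cap U\rest x$.

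Second, I would translate this convergence into a condition indexed by the countable dense subalgebra. Working below a fixed $v\in U$ whose coordinates already lie in the dense subalgebra --- such $v$ exists by \cite[Lemma 4.5]{bp:comb}, exactly as in the proof of Lemma~\ref{lemma:cof} --- every approximation $c^\ell$ of a refinement $c\le v$ in $C_\varphi$ ranges over a fixed countable set, so by Remark~\ref{remark:cont} the values $g(c^\ell)$ involved form a countable subset of $\C_\omega$. Since the dense subalgebra is dense, $g^*(c)=g(c)$ is equivalent to the requirement that no $d_k>\bbot$ satisfy both $d_k\le g(c^\ell)$ for all $\ell$ and $d_k\wedge g(c)=\bbot$. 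For each $k$ I would distinguish two cases. If $d_k\wedge g(c)>\bbot$ for every $c\in U\rest v$, the forbidden configuration never occurs and $d_k$ imposes no constraint. Otherwise there is $c_k\in U\rest v$ with $d_k\wedge g(c_k)=\bbot$, whence $d_k\le\neg g(c)$ for all $c\le c_k$, and the task for $d_k$ becomes: find $x_k\le c_k$ in $U$ such that $d_k\not\le g^*(c)$ --- that is, some approximation $c^\ell$ already omits $d_k$ --- for every $c\le x_k$ in $C_\varphi$.

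The heart of the argument, and the step I expect to be the main obstacle, is producing these $x_k$ and amalgamating them. Writing $c^\ell=c\vee t_\ell$ with $t_\ell=\bigvee\Set{a_i\wedge\neg c}{i\ge\ell}$ a disjoint tail decreasing to $\bbot$, the forbidden configuration says that $g$ keeps $d_k$ in its value after adjoining arbitrarily high full tails, although $g(c)$ itself omits $d_k$; this is precisely the ``look-ahead'' that the coherent $P$-property must eliminate. I would apply the coherent $P$-ultrafilter property to the antichain $P=\Set{a_i}{i<\omega}$ and to a family $\Set{X_k}{k<\omega}$ of subsets of $P$ chosen so that $a_i\in X_k$ records that, past coordinate $i$, adjoining full tails cannot reinstate $d_k$; the pseudo-intersection $Y\subseteq P$ it returns, with $\bigvee Y\in U$ and each $Y\setminus X_k$ finite, yields the single element $x=\bigvee Y\in U\rest v$. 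Finally I would pick $\varphi$ increasing fast enough that each capture level $\varphi(m_j)$ of a $c\in C_\varphi\cap U\rest x$ lies beyond all the finite discrepancies $Y\setminus X_k$ relevant up to stage $j$ --- the same diagonal device used in Lemmas~\ref{lemma:cof} and~\ref{lemma:limit} --- so that for every such $c$ and every $k$ some approximation omits $d_k$, giving $g^*(c)\le g(c)$. The delicate point throughout is verifying that the sets $X_k$ are $U$-large, i.e.\ $\bigvee X_k\in U$; this is where monotonicity of $g$ and the fact that no $a_i$ belongs to $U$ (so that $x$ carries no full coordinate) must be combined, in the spirit of Ketonen's diagonalization underlying Proposition~\ref{proposition:stary}.
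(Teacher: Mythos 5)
Your opening move already breaks the argument: you commit to the basic map $\hat g(c^\ell)=g(c^\ell)$, i.e.\ $g^*(c)=\bigwedge_\ell g(c^\ell)$, but no choice of $x$ and $\varphi$ can make this $g^*$ agree with $g$ on $C_\varphi\cap U\rest x$ in general. Consider the monotonic map $g(u)=\btop$ if $u\ge\bigvee\Set{a_i}{i\ge\ell}$ for some $\ell$, and $g(u)=a_0$ otherwise. Every $c^\ell$ contains the full tail $\bigvee\Set{a_i}{i\ge\ell}$, so $g^*(c)=\btop$ for every $c$. On the other hand, splitting each $a_i$ into two nonzero pieces produces an element $z$ with $\bbot<z\wedge a_i<a_i$ for all $i$; one of $z,\neg z$ lies in $U$, and by Lemma \ref{lemma:cof} every $C_\varphi\cap U\rest x$ contains some $c$ below (that element)$\wedge\, x$. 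Such a $c$ contains no full tail, so $g(c)=a_0\ne\btop=g^*(c)$. The fix, which is the whole point of the paper's proof, is that the basic map must be $c^\ell\mapsto g\bigl(c^\ell\wedge x\bigr)$: the element $x$ enters the \emph{definition} of $g^*$, not merely the domain on which agreement is claimed, precisely so that the uncontrolled tail $\bigvee\Set{a_i}{i\ge\ell}$ is cut down to $\bigvee\Set{a_i\wedge x}{i\ge\ell}$, which the construction forces below a prescribed element $x_\ell$.

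Second, your mechanism for forcing $g^*(c)\le g(c)$ is missing its key idea. Fixing $k$ and a single witness $c_k$ with $d_k\wedge g(c_k)=\bbot$ only controls those $c$ lying below $c_k$ (or below your $x_k\le c_k$); you cannot place one $x\in U$ below all countably many $c_k$ (the ultrafilter is not countably closed, and a pseudo-intersection obtained from the coherent $P$-property does not restore $c\le c_k$), and there may be $c$ with $d_k\wedge g(c)=\bbot$ that are not below $c_k$ at all. The paper resolves this by a finite-pattern uniformization: at stage $n$ it considers all (finitely many) patterns $s\colon n\to\{d_0,\dots,d_{n-1}\}$ and all $k<n$, chooses for each pair a witness $u_{s,k}\in U$ with $d_k\nleq g(u_{s,k})$ and $a_i\wedge u_{s,k}=s(i)$ for $i<n$ whenever one exists, and shrinks $x_n$ below all of them. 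Then for \emph{any} $c$ with $d_k\nleq g(c)$ whose first $n$ coordinates realize $s$, one gets $c^n\wedge x\le s(0)\vee\dots\vee s(n-1)\vee x_n\le u_{s,k}$, so monotonicity kills $d_k$ in $g(c^n\wedge x)$ without $c$ being comparable to the witness; this is also exactly where membership in $C_\varphi$ (coordinates lying among $d_0,\dots,d_{\varphi(m_j)-1}$) is used. Your restriction to an element with coordinates in the dense subalgebra and your use of the coherent $P$-property to diagonalize the $x_n$ are the right ingredients and match the paper, but without the pattern uniformization and the corrected $g^*$ the argument does not close.
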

\begin{proof} We generalize the proof of \cite[Theorem 20]{dt:tukey}, using Lemma \ref{lemma:cof} to control the interaction between the maximal antichain and the dense subalgebra. We begin by constructing a subset $\Set{x_n}{n<\omega}$ of $U$ such that, for each $n<\omega$,
\begin{itemize}
\item $x_{n+1}\le x_n$;
\item $x_n\wedge\bigvee\Set{a_i}{i<n}=\bbot$;
\item for every function $s\colon n\to\{d_0,\dots,d_{n-1}\}$ and for every $k<n$, if there exists $u\in U$ such that $d_k\nleq g(u)$ and $s(i)=a_i\wedge u$ for all $i<n$, then $d_k\nleq g(s(0)\vee\dots\vee s(n-1)\vee x_n)$.
\end{itemize}

Recursively, let $x_0=\btop$ and suppose we have $x_{n-1}$ for some $n>0$. Choose first some $w\in U$ such that $w\le x_{n-1}$ and $w\wedge\bigvee\Set{a_i}{i<n}=\bbot$. Now, for each function $s\colon n\to\{d_0,\dots,d_{n-1}\}$ and $k<n$, if there exists $u\in U$ such that $d_k\nleq g(u)$ and $s(i)=a_i\wedge u$ for all $i<n$, then let $u_{s,k}$ be such a $u$. On the other hand, if such $u$ does not exist, then simply let $u_{s,k}=w$. Finally, we define
\[
x_n=w\wedge\bigwedge\Set[\big]{u_{s,k}}{s\colon n\to\{d_0,\dots,d_{n-1}\}\text{ and }k<n}.
\]
To check the sequence we constructed has the desired properties, let $n<\omega$. Clearly the first two conditions present no problems, so let $s\colon n\to\{d_0,\dots,d_{n-1}\}$ and $k<n$ be such that there exists $u\in U$ with $d_k\nleq g(u)$ and $s(i)=a_i\wedge u$ for all $i<n$. By construction, this means $d_k\nleq g(u_{s,k})$, but $s(0)\vee\dots\vee s(n-1)\vee x_n\le u_{s,k}$ and therefore $d_k\nleq g(s(0)\vee\dots\vee s(n-1)\vee x_n)$ by monotonicity.

Since $U$ is a coherent $P$-ultrafilter, by \cite[Lemma 4.3]{bp:comb} there exists $y\in U$ such that for each $n<\omega$ the set $\Set{i<\omega}{a_i\wedge y\wedge\neg x_n>\bbot}$ is finite. Accordingly, we can find a strictly increasing sequence $\Seq{n_j}{j<\omega}$ such that $n_0=0$ and for all $j<\omega$
\[
\bigvee\Set{a_i\wedge y}{i\ge n_{j+1}}\le x_{n_j}.
\]
Suppose
\[
z=\bigvee\Set[\big]{a_i}{\text{there exists }j<\omega\text{ such that }n_{2j+1}\le i<n_{2j+2}}\notin U,
\]
then we let $x=y\wedge\neg z$ and, for all $j<\omega$, let $\varphi(j)=n_{2j+1}$. In the case of $z\in U$, of course we would let $x=y\wedge z$, $\varphi(j)=n_{2j}$, and proceed similarly. Observe that, by definition, for each $j<\omega$
\begin{equation}\label{eq:due}
\bigvee\Set{a_i\wedge x}{i\ge\varphi(j)}=\bigvee\Set{a_i\wedge x}{i\ge n_{2j+2}}\le \bigvee\Set{a_i\wedge y}{i\ge n_{2j+2}}\le x_{\varphi(j)}.
\end{equation}

Next, we aim to show that for all $c\in C_\varphi\cap U\rest x$
\begin{equation}\label{eq:tre}
g(c)=\bigwedge\Set*{g\bigl(c^\ell\wedge x\bigr)}{\ell<\omega}.
\end{equation}
Clearly, if $\ell<\omega$ then $g(c)\le g\bigl(c^\ell\wedge x\bigr)$ by monotonicity. To prove the reverse inequality, our strategy is to show that for all $k<\omega$ if $d_k\le\bigwedge\Set*{g\bigl(c^\ell\wedge x\bigr)}{\ell<\omega}$ then $d_k\le g(c)$. So let us fix $k$ such that $d_k\le g\bigl(c^\ell\wedge x\bigr)$ for all $\ell<\omega$. Since $c\in C_\varphi$, there exists a strictly increasing sequence $\Seq{m_j}{j<\omega}$ such that for all $j<\omega$
\[
\Set{a_i\wedge c}{i<\varphi(m_j)}\subseteq\bigl\{d_0,\dots,d_{\varphi(m_j)-1}\bigr\}.
\]
Now fix a sufficiently large $j$ such that $k<\varphi(m_j)$. Letting $s\colon\varphi(m_j)\to\bigl\{d_0,\dots,d_{\varphi(m_j)-1}\bigr\}$ be the function defined by $s(i)=a_i\wedge c$, we have the chain of implications
\[
d_k\le g\bigl(c^{\varphi(m_j)}\wedge x\bigr)\implies d_k\le g\bigl(s(0)\vee\dots\vee s(\varphi(m_j)-1)\vee x_{\varphi(m_j)}\bigr)\implies d_k\le g(c),
\]
where the first implication follows from \eqref{eq:due} and monotonicity of $g$, and the second follows from the third condition in the construction of $\Set{x_n}{n<\omega}$. Therefore, $d_k\le g(c)$ and \eqref{eq:tre} is established.

Let $g^*\colon C_\varphi\to\C_\omega$ be defined as follows: for every $c\in C_\varphi$
\[
g^*(c)=\bigwedge\Set*{g\bigl(c^\ell\wedge x\bigr)}{\ell<\omega}.
\]
From this definition, it follows that $g^*$ is represented by the basic map given by $c^\ell\mapsto g\bigl(c^\ell\wedge x\bigr)$. Since $g$ is monotonic, clearly $g^*$ is monotonic as well. Finally, \eqref{eq:tre} implies that $g^*\rest(U\rest x)=g\rest(C_\varphi\cap U\rest x)$.
\end{proof}

We move on to present the main result of this section.

\begin{theorem}\label{theorem:main} Assume $2^{\aleph_0}=\aleph_1$. Then there exist two ultrafilters $U$ and $V$ on $\C_\omega$ such that $U\equiv_\mathrm{JPR} V$ and $\langle U,\ge\rangle\nleq_{\mathrm{T}}\langle V,\ge\rangle$ and $\langle V,\ge\rangle\nleq_{\mathrm{T}}\langle U,\ge\rangle$.
\end{theorem}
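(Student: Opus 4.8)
The plan is to run a recursion of length $\omega_1$, modelled on the proof of Theorem \ref{theorem:m}, with two decisive additions: I make $U$ and $V$ into coherent $P$-ultrafilters disjoint from the fixed antichain $\Set{a_i}{i<\omega}$, and I diagonalise against all potential Tukey reductions. The whole point is that, although a priori there are $2^{2^{\aleph_0}}$ candidate reductions, the representation Theorem \ref{theorem:rep} lets me discard all but those agreeing with a function \emph{represented by a basic map} on some $C_\varphi$, and by Remark \ref{remark:cont} there are only $2^{\aleph_0}=\aleph_1$ of the latter. So, under the Continuum Hypothesis, I first fix an enumeration $\Seq{b_\alpha}{\alpha<\omega_1}$ of $\C_\omega$, an enumeration $\Seq{P_\alpha}{\alpha<\omega_1}$ of $\Part(\C_\omega)$, an enumeration of all coherent-$P$ instances $\langle P,\Set{X_n}{n<\omega}\rangle$, and an enumeration $\Seq{g^*_\alpha}{\alpha<\omega_1}$ of all monotonic functions represented by a basic map on some $C_\varphi$, this last list split into two interleaved sublists so that each direction of Tukey reducibility is confronted with every candidate.

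Throughout, I maintain filters $U_\alpha,V_\alpha$, maximal antichains $A_\alpha$, and involutive bijections $f_\alpha\colon A_\alpha\to A_\alpha$ exactly as in conditions \eqref{diffuno}--\eqref{diffquattro} of the proof of Theorem \ref{theorem:m}, so that the identical argument through Lemmas \ref{lemma:succ} and \ref{lemma:limit} yields $U\equiv_\mathrm{JPR} V$ at the end; I take $A_0=\Set{a_i}{i<\omega}$, so that the base filter $U_0=V_0$ of condition \eqref{diffuno} gives $\neg a_i\in U_0$ for all $i$, whence $U\cap\Set{a_i}{i<\omega}=V\cap\Set{a_i}{i<\omega}=\emptyset$. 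Since only countably many generators are added before any stage $\alpha<\omega_1$, each $U_\alpha$ and $V_\alpha$ has cofinality $\le\aleph_0<\aleph_1=\mathfrak{d}$, so Proposition \ref{proposition:stary} applies: at the stage reserved for a coherent-$P$ instance I produce the required $Y$ and feed $\bigvee Y$ into the construction via Lemma \ref{lemma:succ} (with $y=\bigvee Y$), thereby making $U$ and $V$ coherent $P$-ultrafilters. The element $b_\alpha$ is decided in both filters exactly as in Theorem \ref{theorem:m}, by an application of Lemma \ref{lemma:succ} with $z=\pm b_\alpha$.

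The genuinely new ingredient is the Tukey-killing step. At the stage handling $g^*_\alpha\colon C_\varphi\to\C_\omega$ against the direction $U\le_{\mathrm{T}} V$, the aim is to install an \emph{escaping} element $u_0$ into $U_{\alpha+1}$ together with a partner $w_0$ into $V_{\alpha+1}$ so that the cone below $u_0$ misses the entire image $g^*_\alpha[C_\varphi\cap V]$; concretely I set
\[
w_0=\neg\bigvee\Set*{c\in C_\varphi}{g^*_\alpha(c)\le u_0},
\]
so that once $w_0\in V$ no $c\in C_\varphi\cap V$ satisfies $g^*_\alpha(c)\le u_0$. To splice this into the symmetric apparatus I choose $u_0$ meeting every cell of $A_\alpha$ (the splitting trick of the successor step of Theorem \ref{theorem:m}), which secures the hypothesis $\bigvee\Set{a\in A_\alpha}{a\wedge u_0>\bbot}\in U_\alpha$ of Lemma \ref{lemma:succ}, and then invoke Lemma \ref{lemma:succ} with $y=u_0$ and $z=w_0$; note that Lemma \ref{lemma:succ} simultaneously preserves the refinement-coherence needed for JPR-equivalence. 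The direction $V\le_{\mathrm{T}} U$ is treated symmetrically on the interleaved sublist.

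The main obstacle is the feasibility of this escaping choice: one must exhibit $u_0$ for which $w_0$ is genuinely addable, that is, for which $V_\alpha\cup\{w_0\}$ has the finite intersection property (equivalently $\bigvee\Set{c\in C_\varphi}{g^*_\alpha(c)\le u_0}\notin V_\alpha$), all while respecting the rigid coupling Lemma \ref{lemma:succ} imposes. This is exactly where the coherent $P$-structure and the countable, eventually-determined nature of a basic map are needed: using that $C_\varphi$ is closed under initial segments and that $g^*_\alpha(c)=\bigwedge_\ell\hat{g}(c^\ell)$ is controlled by countably many values, I thin out within the cells of $A_\alpha$ to drive the pre-image supremum below the current filter $V_\alpha$. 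Once the recursion is complete, incomparability is verified as follows: if $\langle U,\ge\rangle\le_{\mathrm{T}}\langle V,\ge\rangle$, then by Remark \ref{remark:monotonic} there is a monotonic $g\colon V\to U$ sending cofinal sets to cofinal sets; applying Theorem \ref{theorem:rep} to $V$ yields $x\in V$, some $\varphi$, and a basic-map function $g^*$ with $g^*\rest(V\rest x)=g\rest(C_\varphi\cap V\rest x)$, where $C_\varphi\cap V\rest x$ is cofinal in $V$ by Lemma \ref{lemma:cof}. Then $g^*[C_\varphi\cap V\rest x]$ would be cofinal in $U$, contradicting the escaping $u_0$ installed when $g^*$ was handled. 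The reverse direction is identical, and together with $U\equiv_\mathrm{JPR} V$ this completes the proof.
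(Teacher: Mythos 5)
Your overall architecture matches the paper's: under CH you enumerate $\C_\omega$, the coherent-$P$ instances, and (via Theorem \ref{theorem:rep} and Remark \ref{remark:cont}) only the $\aleph_1$ many monotonic functions represented by basic maps; you keep the $U_\alpha,V_\alpha,A_\alpha,f_\alpha$ apparatus of Theorem \ref{theorem:m} running through Lemmas \ref{lemma:succ} and \ref{lemma:limit} for JPR-equivalence; and you use Proposition \ref{proposition:stary} together with $\cof(U_\alpha)=\aleph_0<\mathfrak{d}$ to make $U$ and $V$ coherent $P$-ultrafilters disjoint from $\Set{a_i}{i<\omega}$. All of that is sound and is what the paper does.

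The gap is in the Tukey-killing step. You propose to \emph{always} install an escaping pair: choose $u_0$ meeting every cell of $A_\alpha$ and add $w_0=\neg\bigvee\Set{c\in C_\varphi}{g^*_\alpha(c)\le u_0}$ to $V$. For this you need $\neg w_0\notin V_\alpha$, and your only justification is that you ``thin out within the cells of $A_\alpha$ to drive the pre-image supremum below the current filter'' --- but no such argument is given, and none can be given in general: if, say, $g^*_\alpha$ is the constant function $\bbot$ (or any function many of whose values are incompatible with $U_\alpha$), then $\Set{c\in C_\varphi}{g^*_\alpha(c)\le u_0}=C_\varphi$ for every $u_0$, so $w_0$ is far too small to be added, and the escape is simply impossible. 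Such $g^*_\alpha$ are harmless (they cannot witness a reduction), but your construction must do \emph{something} at that stage, and your final verification covers only the escape case. The paper's proof avoids this by making condition \eqref{nove} a genuine dichotomy: fix $b$ splitting every cell of $A_\alpha$; \emph{either} some $x\in\dom(g_\alpha)$ addable to $V_\alpha$ has $g_\alpha(x)\wedge b=\bbot$, in which case one puts $b$ into $U$ and $x$ into $V$ (so that at the end $\neg g_\alpha(x)\in U$ while $x\in V$, killing the reduction from that side), \emph{or} every addable $x$ has $g_\alpha(x)\nleq\neg b$, and then $u=\neg b$ is the escaping element for free. Your argument needs this first horn both in the successor step and in the concluding contradiction (where Lemma \ref{lemma:cof} is used to find $c\in C_\varphi\cap V$ below $v\wedge y$ so that $g(c)=g^*(c)\le g^*(v)$). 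As written, the proposal is incomplete at precisely the point that makes the theorem nontrivial.
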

\begin{proof} Let us begin by enumerating
\[
\C_\omega=\Set{b_\alpha}{\alpha<\omega_1}.
\]
By Remark \ref{remark:cont}, for each strictly increasing $\varphi\colon\omega\to\omega$ there are no more than $2^{\aleph_0}$ many functions $g^*\colon C_\varphi\to\C_\omega$ which are represented by a basic map: we enumerate all such functions, for all possible $\varphi$, as
\[
\Set{g_\alpha}{\alpha<\omega_1}.
\]
Finally, we enumerate the conditions for being coherent $P$-ultrafilters. More precisely, consider the set of all pairs $\bigl\langle\Set{X_n}{n<\omega},P\bigr\rangle$ such that $P\in\Part(\C_\omega)$ and $X_n\subseteq P$ for all $n<\omega$; we enumerate this set of pairs as $\Set{p_\alpha}{\alpha<\omega_1}$.

We shall carry out a recursive construction of filters $\Seq{U_\alpha}{\alpha<\omega_1}$ and $\Seq{V_\alpha}{\alpha<\omega_1}$ on $\C_\omega$, together with maximal antichains $\Seq{A_\alpha}{\alpha<\omega_1}$ in $\C_\omega$ and functions $\Seq{f_\alpha}{\alpha<\omega_1}$, according to the following conditions:
\begin{enumerate}
\item\label{uno} $A_0=\Set{a_i}{i<\omega}$, $U_0=V_0=\Set{b\in\C_\omega}{\Set{i<\omega}{a_i\wedge\neg b>\bbot}\text{ is finite}}$, and $f_0\colon A_0\to A_0$ is the identity function;
\item\label{due} for every $\alpha<\omega_1$, both $U_\alpha$ and $V_\alpha$ are based on $A_\alpha$ and $\cof(U_\alpha)=\cof(V_\alpha)=\aleph_0$;
\item\label{tre} for every $\alpha<\omega_1$, $f_\alpha\colon A_\alpha\to A_\alpha$ is a bijection, $f_\alpha=f_\alpha^{-1}$, and for every $X\subseteq A_\alpha$ we have $\bigvee X\in U_\alpha\iff\bigvee f_\alpha[X]\in V_\alpha$;
\item\label{quattro} if $\alpha<\beta<\omega_1$ then $U_\alpha\subseteq U_\beta$, $V_\alpha\subseteq V_\beta$, and $\bigvee\Set[\big]{b\in A_\beta}{\exists a\in A_\alpha\bigl(b\le a\text{ and }f_\beta(b)\le f_\alpha(a)\bigr)}\in U_0$;
\item\label{cinque} if $\delta<\omega_1$ is a limit ordinal, then $U_\delta=\bigcup_{\alpha<\delta}U_\alpha$ and $V_\delta=\bigcup_{\alpha<\delta}V_\alpha$;
\item\label{sei} for every $\alpha<\omega_1$, if $p_\alpha=\bigl\langle\Set{X_n}{n<\omega},P\bigr\rangle$, then $A_{\alpha+1}$ is a refinement of $P$;
\item\label{sette} for every $\alpha<\omega_1$, if $p_\alpha=\bigl\langle\Set{X_n}{n<\omega},P\bigr\rangle$, then either there exists $n<\omega$ such that $\neg\bigvee X_n\in U_{\alpha+1}$, or there exists $Y\subseteq P$ such that $\bigvee Y\in U_{\alpha+1}$ and for every $n<\omega$ the set $Y\setminus X_n$ is finite; similarly for $V_{\alpha+1}$;
\item\label{otto} for every $\alpha<\omega_1$, either $b_\alpha\in U_{\alpha+1}$ or $\neg b_\alpha\in U_{\alpha+1}$, and similarly either $b_\alpha\in V_{\alpha+1}$ or $\neg b_\alpha\in V_{\alpha+1}$;
\item\label{nove} for every $\alpha<\omega_1$, either there exists $v\in\dom(g_\alpha)\cap V_{\alpha+1}$ such that $\neg g_\alpha(v)\in U_{\alpha+1}$, or there exists $u\in U_{\alpha+1}$ such that for all $x\in\dom(g_\alpha)$ if $V_{\alpha}\cup\{x\}$ has the finite intersection property then $g_\alpha(x)\nleq u$, and similarly reversing the roles of $U$ and $V$.
\end{enumerate}

As usual, the base step presents no problems. For $\alpha<\omega_1$, let $U_\alpha$, $V_\alpha$, and $f_\alpha\colon A_\alpha\to A_\alpha$ be given. Let $b\in\C_\omega$ be an element such that for every $a\in A_\alpha$ both $a\wedge b>\bbot$ and $a\wedge\lnot b>\bbot$. Suppose there exists $x\in\dom(g_\alpha)$ such that $V_\alpha\cup\{x\}$ has the finite intersection property and $g_\alpha(x)\wedge b=\bbot$; then let $u=b$ and $v=x$. Suppose not, which means for all $x\in\dom(g_\alpha)$ if $V_{\alpha}\cup\{x\}$ has the finite intersection property then $g_\alpha(x)\wedge b>\bbot$; then let $u=\neg b$ and $v=\btop$. By Lemma \ref{lemma:succ}, there exist filters $U'_\alpha$ and $V'_\alpha$, a maximal antichain $A'_\alpha$, and a bijection $f'_\alpha\colon A'_\alpha\to A'_\alpha$ such that $U_\alpha\cup\{u\}\subseteq U'_\alpha$, $V_\alpha\cup\{v\}\subseteq V'_\alpha$, and conditions \eqref{due}, \eqref{tre}, \eqref{quattro} are preserved. Symmetrically, by applying Lemma \ref{lemma:succ} once more, we can ensure the same in the opposite direction, thus fulfilling condition \eqref{nove}.

Next, to deal with condition \eqref{sette}, suppose $p_\alpha=\bigl\langle\Set{X_n}{n<\omega},P\bigr\rangle$. In case there exists $n<\omega$ such that $\bigvee X_n\notin U'_\alpha$, we let $y=\neg\bigvee X_n$. In case there is no such $n$, since $\cof(U'_\alpha)=\aleph_0<\mathfrak{d}$ we may apply Proposition \ref{proposition:stary} to obtain $Y\subseteq P$ such that $U'_\alpha\cup\{\bigvee Y\}$ has the finite intersection property and for all $n<\omega$ the set $Y\setminus X_n$ is finite; in this case we let $y=\bigvee Y$. By Lemma \ref{lemma:succ}, there exist filters $U''_\alpha$ and $V''_\alpha$, a maximal antichain $A''_\alpha$, and a bijection $f''_\alpha\colon A''_\alpha\to A''_\alpha$ such that $U'_\alpha\cup\{y\}\subseteq U''_\alpha$, $V'_\alpha\subseteq V''_\alpha$, and conditions \eqref{due}, \eqref{tre}, \eqref{quattro} are preserved. We then repeat this construction for $V''_\alpha$.

To complete the successor step, it remains to guarantee conditions \eqref{otto} and \eqref{sei}, but this can be done exactly as for Theorem \ref{theorem:m}. Furthermore, at the limit step we apply Lemma \ref{lemma:limit} and argue as before.

In conclusion, let $U=\bigcup_{\alpha<\omega_1} U_\alpha$ and $V=\bigcup_{\alpha<\omega_1} V_\alpha$, which are JPR-equivalent ultrafilters. To see that they are also Tukey-incomparable, suppose for example that $\langle U,\ge\rangle\le_{\mathrm{T}}\langle V,\ge\rangle$. In particular, by Remark \ref{remark:monotonic} there exists a monotonic function $g\colon V\to U$ such that $g[V]$ is cofinal in $U$. By conditions \eqref{uno} and \eqref{sette}, $V$ is a coherent $P$-ultrafilter such that $V\cap\Set{a_i}{i<\omega}=\emptyset$, therefore Theorem \ref{theorem:rep} provides $y\in V$, a strictly increasing $\varphi\colon\omega\to\omega$, and a monotonic function $g^*\colon C_\varphi\to\C_\omega$ represented by a basic map such that $g^*\rest(V\rest y)=g\rest(C_\varphi\cap V\rest y)$. Keeping in mind that $g^*=g_\alpha$ for some $\alpha<\omega_1$, condition \eqref{nove} gives two possibilities.

The first is that there exists $v\in C_\varphi\cap V$ such that $\neg g^*(v)\in U$. If this happens, by Lemma \ref{lemma:cof} there exists $c\in C_\varphi\cap V$ such that $c\le v\wedge y$. Therefore $g(c)=g^*(c)\le g^*(v)$ and so $g^*(v)\in U$, a contradiction. The second possibility is that there exists $u\in U$ such that for all $x\in C_\varphi\cap V$ we have $g^*(x)\nleq u$. Then, using the fact that $g[V]$ is cofinal in $U$, let $v\in V$ be such that $g(v)\le u$. Again, by taking some $c\in C_\varphi\cap V$ such that $c\le v\wedge y$, we obtain $g^*(c)=g(c)\le g(v)\le u$, a contradiction. This shows that $U$ and $V$ are Tukey-incomparable and completes the proof.
\end{proof}

Combined with Proposition \ref{proposition:27}, the above theorem gives in particular two ultrafilters which are JPR-equivalent but M-incomparable. In this sense, its conclusion is stronger than Theorem \ref{theorem:m}, which on the other hand had more flexibility in the choice of the Boolean algebra.

\begin{question} Is the conclusion of Theorem \ref{theorem:main} still true under the weaker assumption that $\mathfrak{d}=2^{\aleph_0}$?
\end{question}

\end{document}